\documentclass[a4paper]{article}
\usepackage{graphicx}
\usepackage{amsmath,amssymb,latexsym}
\usepackage{amsthm}
\usepackage{hyperref}
\usepackage[top=2.5cm, bottom=3cm, left=3cm, right=3cm]{geometry}
\usepackage{subcaption}


\newcommand{\norm}[1]{\left\Vert#1\right\Vert}

\newcommand{\dprod}[2]{\left\langle#1,#2 \right\rangle}
\newcommand{\vol}[1]{\text{Vol}\left(#1 \right)}
\newcommand{\bR}{\displaystyle\mathbb{R}}

\newcommand{\bE}{\displaystyle\mathbb{E}}
\newcommand{\bP}{\displaystyle\mathbb{P}}

\newcommand{\bZ}{\displaystyle\mathbb{Z}}

\newcommand{\proj}{\textrm{Proj}}

\newcommand{\var}{\textrm{Var}}


\newtheorem{theorem}{Theorem}[section]

\newtheorem{prop}[theorem]{Proposition}
\newtheorem{remark}[theorem]{Remark}
\newtheorem{example}[theorem]{Example}

\newtheorem{defi}[theorem]{Definition}
\newtheorem{cor}[theorem]{Corollary}

\newtheorem*{conj*}{Conjecture}

\newtheorem{question}[theorem]{Question}

\renewcommand{\S}{Section }


\begin{document}
\title{Sampling by Intersections with Random Geodesics}
\author{Uri Grupel}
\date{}
\maketitle


\begin{abstract}
	In this paper we compare the different phenomena that occur when intersecting geometric objects with random geodesics on the unit sphere and inside convex bodies. On the high dimensional sphere we see that with probability bounded away from zero, the observed length will deviate from the actual measure by at most a fixed error for any subset, while in convex bodies we can always choose a subset for which the behavior would be close to a zero-one law, as the dimension grows. The result for the sphere is based on an analysis of the Radon transform. Using similar tools we analyze the variance of intersections on the sphere by higher dimensional random subspaces, and on the discrete torus by random arithmetic progressions.
\end{abstract}

\section{Introduction}

The main question of this paper is whether the length of the intersection with a random geodesic curve can represent faithfully the measure of a given set. While we consider this to be a natural question in geometry, there are additional motivations for such an investigation. One such motivation is connected to the efficiency of algorithmic sampling results, such as \textit{hit and run} \cite{lovasz+simonovits93}.

Given a subset $A$ of some ambient space $M$, the length of the intersection of $A$ with a random geodesic curve is related to the probability of escaping the set $A$ by choosing a uniform point on the geodesic. This is related to the notion of \textit{conductance}, which is key in studying the effectiveness of hit and run algorithms (e.g \cite{lovasz+simonovits93,lovasz+vempala04}).

We start with the unit sphere $S^{n-1}=\{x\in\bR^n;\;|x|=1\}$ as our ambient space. In this case, we expand our scope from geodesics to higher dimensional subspaces. Let $A\subseteq S^{n-1}$ be a measurable subset of the $n$-dimensional sphere. Let $H\subseteq\bR^n$ be a random $k$-dimensional linear subspace. We investigate the random variable
\[X=\frac{\sigma_{H}(A\cap H)}{\sigma_{n-1}(A)},\]
where $\sigma_{n-1}$ and $\sigma_H$ are the rotationally invariant probability measures on $S^{n-1}$ and $S^{n-1}\cap H$ respectively.

The case $k=2$ is our original question of intersection of a set with a random geodesic. In \cite{klartag+regev11}, Klartag and Regev used the case of $k=n/2$ in order to give a lower bound on the communication complexity of the \textit{Vector in Subspace Problem}. The connection between the concentration phenomenon of random intersections and communication complexity is through the \textit{rectangle method}. Here, high concentration of $X$ shows that any protocol would require the exchange of many bits in order to distinguish between different states.

In \cite{klartag+regev11} Klartag and Regev showed that when the dimension $k$ is large, the random variable $X$ is highly concentrated around its mean $1$. Their proof consisted of two main steps. First, they dealt with the case where $H$ is a random hyperplane ($k=n-1$). Then they used the result of hyperplanes repeatedly in order to obtain concentration inequalities for $k$ which depends on $n$ linearly.

The result of Klartag and Regev is sharp, but their method proves to be more difficult when the dimension of $H$ is low. Hence we employ a direct analysis of the affect the intersection has, by defining the Radon transform, as we discuss in \S \ref{sec:radon}. In \S \ref{sec:sphere_geodesics} we build upon this analysis and obtain a dimension free estimate on the probability of $X$ diverging from one by a fixed percentage, for random geodesics (the case $k=2$). This result requires us to consider sets of large measure.

\begin{theorem}\label{thm:sphere_geodesics}
	Let $A\subseteq S^{n-1}$ be a measurable subset, such that $\sigma_{n-1}(A)=1/2$ then
	\[\bP_{L}\left(\left|\frac{\sigma_{L}(A\cap L)}{\sigma_{n-1}(A)}-1\right|\geq\frac{1}{2^{1/3}}\right)\leq \frac{1}{2^{1/3}},\]
	where $L$ is a uniformly chosen geodesic curve on $S^{n-1}$ and $\sigma_{L}$ is the uniform probability measure on $S^{n-1}\cap L$.
\end{theorem}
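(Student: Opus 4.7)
The plan is to apply Chebyshev's inequality to $X-1$ after recognizing it as a Radon transform, and then to use the spectral analysis of the Radon transform from \S\ref{sec:radon} to control the resulting second moment. First I would center the indicator: set $f = 2\mathbf{1}_A - 1$, so that $|f|\equiv 1$ (hence $\|f\|_{L^2(S^{n-1})}^2 = 1$) and $\int f\,d\sigma_{n-1} = 2\sigma_{n-1}(A) - 1 = 0$. Since $\sigma_{n-1}(A) = 1/2$, the random variable $X$ equals $2\sigma_L(A\cap L)$, and so
\[
X-1 \;=\; \int_L \bigl(2\mathbf{1}_A - 1\bigr)\,d\sigma_L \;=\; (Rf)(L),
\]
where $R\colon L^2(S^{n-1}) \to L^2(G_{n,2})$ denotes the Radon transform that averages a function on $S^{n-1}$ over a great circle. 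Chebyshev then yields $\bP(|X-1|\geq t)\leq \bE_L[(Rf)^2]/t^2$, reducing the theorem to a second-moment bound on $Rf$.

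Next I would diagonalize $R$ on spherical harmonics. Decompose $f=\sum_{k\geq 1}f_k$ with $f_k$ the projection of $f$ onto degree-$k$ harmonics; the $k=0$ component vanishes because $\int f = 0$. The transform $R$ is $O(n)$-equivariant and each degree-$k$ harmonic representation appears with multiplicity one in $L^2(G_{n,2})$, so Schur's lemma forces $R$ to act on the degree-$k$ space as a scalar $\lambda_k$. Moreover $\lambda_k = 0$ for odd $k$, since a great circle is centrally symmetric while odd harmonics are antisymmetric. The quantitative input I would take from \S\ref{sec:radon} is the sharp bound $\lambda_k^2 \leq 1/2$ for all even $k\geq 2$; a direct calculation on $Y_2(x) = x_1^2 - 1/n$ gives $\lambda_2^2 = (n-2)/(2(n-1))$, which saturates the bound as $n \to \infty$, while for higher even $k$ the eigenvalues are smaller because great-circle averaging is a smoothing operator. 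Plancherel then yields
\[
\bE_L[(Rf)^2] \;=\; \sum_{k\geq 1}\lambda_k^2\,\|f_k\|_{L^2}^2 \;\leq\; \tfrac{1}{2}\sum_{k\geq 1}\|f_k\|_{L^2}^2 \;=\; \tfrac{1}{2}\|f\|_{L^2}^2 \;=\; \tfrac{1}{2}.
\]

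Substituting $t = 2^{-1/3}$ into Chebyshev gives $\bP(|X-1|\geq 2^{-1/3}) \leq (1/2)\cdot 2^{2/3} = 2^{-1/3}$, which is precisely the claim. The main obstacle is the eigenvalue bound $\lambda_k^2\leq 1/2$: because $\lambda_2^2$ already attains this value in the limit, the estimate is sharp and no slack is available, so the substantive work all lies in \S\ref{sec:radon}, where one must identify the eigenvalues explicitly and verify the monotone decay of $|\lambda_k|$ in the even degree. Once those spectral estimates are in hand, the theorem itself is a one-line Chebyshev application, as sketched above.
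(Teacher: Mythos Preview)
Your proposal is correct and follows essentially the same route as the paper: center the indicator, identify $X-1$ with the Radon transform $R_2f$, use the spectral decomposition into spherical harmonics together with the bound $\lambda_{2,2\ell}^2\le \lambda_{2,2}^2=(n-2)/(2(n-1))\le 1/2$ (Propositions~\ref{prop:2singular} and~\ref{prop:decreasing}, Corollary~\ref{cor:var}), and finish with Chebyshev. Two cosmetic remarks: your centering $f=2\mathbf{1}_A-1$ coincides with the paper's $f=\mathbf{1}_A/\sigma_{n-1}(A)-1$ precisely because $\sigma_{n-1}(A)=1/2$; and rather than invoking multiplicity one in $L^2(G_{n,2})$, it is cleaner (and what the paper does) to apply Schur's lemma to $R_2^*R_2$ acting on $L^2(S^{n-1})$, which directly gives $\|R_2f_k\|^2=\lambda_k^2\|f_k\|^2$.
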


Remark \ref{rmk:sharp} in \S \ref{sec:sphere_geodesics} shows that we cannot improve the probability bound to a bound that tends to zero when the dimension grows.

In \S \ref{sec:convex_geodesics} we show that there is no analogous result for random geodesics inside convex sets. We show, that for any convex body, we can construct a subset of half the volume of the body, such that a random geodesic curve would either miss it or its complement with high probability.

\begin{theorem}\label{thm:convex_geodesics}
	Let $K\subseteq\bR^n$ be a convex body. There exists a set $A\subseteq K$ such that $\vol{A}/\vol{K}=1/2$ and
	\[\bP\left(\frac{\textrm{length}(L\cap A)}{\textrm{length}(L\cap K)}\in\{0,1\}\right)=1-O^*\left(\frac{1}{\sqrt{n}}\right),\]
	where $L=X+\bR\theta$, $X$ and $\theta$ are independent and distributed uniformly in $K$ and $S^{n-1}$ respectively.
\end{theorem}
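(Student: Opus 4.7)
The natural candidate is a halfspace cut: take $A=K\cap\{\dprod{x}{v}\leq c\}$, where $v\in S^{n-1}$ is chosen to maximize the variance of $\dprod{X}{v}$ over $X$ uniform in $K$, and $c$ is chosen so that $\vol{A}=\frac12\vol{K}$. Writing $\sigma:=\sqrt{\var\dprod{X}{v}}$ and $H:=\{\dprod{x}{v}=c\}$, the event $\{\textrm{length}(L\cap A)/\textrm{length}(L\cap K)\in\{0,1\}\}$ is precisely the event that the chord $L\cap K$ is disjoint from $H$, so one has to show $\bP(L\cap H\cap K\neq\emptyset)=O^*(1/\sqrt{n})$. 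Note that if the chord meets $H$, then
\[
|\dprod{X}{v}-c|\leq \ell(X,\theta)\,|\dprod{\theta}{v}|, \qquad \ell(X,\theta):=\textrm{length}(L\cap K),
\]
so it suffices to bound the probability of this event.

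I would combine three ingredients: (i) sphere concentration, $|\dprod{\theta}{v}|\leq C\sqrt{\log n}/\sqrt{n}$ with probability $1-O(1/\sqrt{n})$; (ii) a typical chord-length bound $\ell(X,\theta)\lesssim\sigma$ up to polylogarithmic factors, on an event of comparable probability; and (iii) Ball's classical inequality $\|\rho_{\dprod{X}{v}}\|_{\infty}\leq C/\sigma$ for the one-dimensional marginal density of $\dprod{X}{v}$. On the intersection of the first two events one has $\ell\,|\dprod{\theta}{v}|\lesssim\sigma\log n/\sqrt{n}$, and (iii) then yields $\bP(|\dprod{X}{v}-c|\leq C\sigma\log n/\sqrt{n})\leq(C/\sigma)\cdot C\sigma\log n/\sqrt{n}=O^{*}(1/\sqrt{n})$. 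Equivalently, one can derive and analyse the identity
\[
\bP(\textrm{chord meets }H\mid\theta)=\frac{|\dprod{\theta}{v}|}{\vol{K}}\int_{H\cap K}\ell(y,\theta)\,dy,
\]
obtained from parametrizing $X=U+s\theta$ with $U\in\theta^{\perp}$ of density $\ell(U,\theta)/\vol{K}$ and using that the projection $H\to\theta^{\perp}$ along $\theta$ has Jacobian $|\dprod{\theta}{v}|$. The factor $|\dprod{\theta}{v}|$ already contributes the desired $\Theta(1/\sqrt n)$, and the remaining geometric integral can be estimated by Ball's theorem $\vol_{n-1}(H\cap K)\leq C\vol{K}/\sigma$ together with a Cauchy--Schwarz step on the chord-length integral.

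The main obstacle is justifying the typical-chord-length estimate. Since the distribution of a random line is \emph{not} affine invariant---only the law of $X$ is, not that of $\theta$---one cannot pass to isotropic position in the usual way, and the variance $\sigma$ must be tracked explicitly through the entire argument. Establishing a bound of the form $\bE_{\theta}\ell(y,\theta)^{2}\lesssim\sigma^{2}$ (up to polylog factors), uniformly for $y\in H\cap K$, is the critical convex-geometric input; this is precisely where the choice of $v$ as the direction of greatest spread of $K$ enters in an essential way.
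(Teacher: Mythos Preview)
Your outline is the paper's argument. The paper also slices $K$ orthogonally to the direction $\xi$ of maximal variance (it takes the symmetric slab $A_\xi=\{|\dprod{x}{\xi}|\geq t_\xi\}$ rather than a single halfspace, which is cosmetic) and combines exactly your three ingredients through the same drift inequality $|\dprod{X}{\xi}-t_\xi|\leq\ell(X,\theta)\,|\dprod{\theta}{\xi}|$; it first runs the argument for isotropic $K$ with $\sigma=1$ and then remarks that the general case follows by choosing $\xi$ as the top eigenvector of the covariance.

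The chord-length estimate you flag as the obstacle is the paper's Proposition~\ref{prop:length}: for isotropic $K$ and any fixed $\theta$, $\bP(\ell(X,\theta)\geq t)\leq 2e^{-ct}$, obtained by Steiner-symmetrizing $K$ in direction $\theta$ (which preserves $\ell$ and does not increase $\var\dprod{\cdot}{\theta}$) and then applying the Borell--Berwald moment inequality to the symmetric marginal $\dprod{Y}{\theta}$. For general $K$ with $\xi$ the maximal-variance direction one has $\var\dprod{X}{\theta}\leq\sigma^2$ for every $\theta$, so the same proof yields $\bP(\ell\geq t)\leq 2e^{-ct/\sigma}$ and hence $\ell\lesssim\sigma\log n$ with high probability. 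This is a tail bound with $X$ random and $\theta$ fixed, so your first route goes through directly; the uniform-in-$y$ estimate you would need for the integral-formula alternative is not required.
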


Here, $O^*$ represent the big O notation up to logarithmic factors. 

The above construction relies on the concentration of measure in convex bodies. It is known that for an isotropic convex body, most of its mass is concentrated in a thin spherical shell. Combining this with the concentrated one dimensional marginals of a uniform random vector on the sphere, we see that a typical random geodesic will miss a neighborhood of the barycenter of the body. This allows us to construct the desired subset on the convex body.

The different phenomena observed in Theorems \ref{thm:sphere_geodesics} and \ref{thm:convex_geodesics}, raise the question: which of the two would occur for random geodesics in different spaces? In \S \ref{sec:finite_fields} we show how the same tools used to analyze random geodesics on the sphere can be used on the discrete tours $\bZ/p\bZ$ where $p$ is prime, and obtain a similar result to Theorem \ref{thm:sphere_geodesics}.

It would be interesting to understand which other ambient spaces $M$ behave similarly to the sphere and the discrete torus, and which behave similarly to convex sets. A more general question is whether we can find a sufficient condition for such phenomena. A possible candidate would be a curvature condition.

\begin{question}
	Does a positive Ricci curvature implies a theorem analogous to Theorem \ref{thm:sphere_geodesics}?
\end{question}

The main tool for understanding both the sphere and the discrete torus is to define the appropriate \textit{Radon transform}, that averages functions on subspaces. In \S \ref{sec:radon} we develop our method to calculate its singular decomposition. The calculations in \S \ref{sec:radon} are not limited to the case of geodesics ($k=2$), and in \S \ref{sec:general_k} we expand our analysis to all $2\leq k\leq n-1$.

One of the consequences of this analysis is a bound on the variance of the random variable $\sigma_{H}(A\cap H)/\sigma_{n-1}(A)$.

\begin{theorem}\label{thm:general_k}
	Let $A\subseteq S^{n-1}$ be a measurable set. Let $2\leq k\leq n-1$, and let $H\subseteq \bR^n$ be a random subspace of dimension $k$. Then,
	\[\textrm{Var}\left(\frac{\sigma_H(A\cap H)}{\sigma_{n-1}(A)}\right)\leq \frac{n-k}{k(n-1)}\left(\frac{1}{\sigma_{n-1}(A)}-1\right).\]
\end{theorem}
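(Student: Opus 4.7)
The approach is spectral: I would use the singular value decomposition of the Radon transform $R\colon L^2(S^{n-1})\to L^2(G(k,n))$, $(Rf)(H)=\int_{S^{n-1}\cap H}f\,d\sigma_H$, constructed in \S\ref{sec:radon}. Write $f=\mathbf{1}_A=\sum_{\ell\geq 0}f_\ell$ in the spherical harmonic decomposition. Then $f_0\equiv\mu:=\sigma_{n-1}(A)$, and since $f$ is an indicator,
\[
\sum_{\ell\geq 1}\|f_\ell\|_{L^2(\sigma_{n-1})}^2=\int f^2\,d\sigma_{n-1}-\mu^2=\mu(1-\mu).
\]
Central symmetry of $S^{n-1}\cap H$ forces $R$ to annihilate all odd-degree harmonics, so only even $\ell$ will contribute to $Rf$.

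Next, the SVD in \S\ref{sec:radon} says that $R$ acts on the degree-$\ell$ harmonic subspace as multiplication by a scalar $\lambda_\ell$, with $\lambda_0=1$, $\lambda_{2j+1}=0$, and the images of different-degree harmonics sit in mutually orthogonal $O(n)$-isotypic components of $L^2(G(k,n))$ (by Schur's lemma applied to the non-isomorphic representations of different degrees). Hence
\[
\bE_H\bigl[(Rf)(H)^2\bigr]=\sum_{\ell\geq 0}\lambda_\ell^2\|f_\ell\|^2,\qquad
\textrm{Var}_H\bigl((Rf)(H)\bigr)=\sum_{\substack{\ell\geq 2\\ \ell\text{ even}}}\lambda_\ell^2\|f_\ell\|^2.
\]
Granted the monotonicity $|\lambda_\ell|\leq|\lambda_2|$ for even $\ell\geq 2$, the right-hand side is at most $\lambda_2^2\,\mu(1-\mu)$; dividing by $\mu^2$ yields the theorem as soon as one identifies $\lambda_2^2=(n-k)/(k(n-1))$.

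The value of $\lambda_2^2$ falls out of testing $R$ on the explicit degree-$2$ harmonic $h(x)=x_1^2-1/n$: a short calculation gives $(Rh)(H)=\|P_He_1\|^2/k-1/n$, and $\|P_He_1\|^2$ is Beta$(k/2,(n-k)/2)$-distributed, so substituting its first two moments into $\|Rh\|_{L^2(G(k,n))}^2/\|h\|_{L^2(S^{n-1})}^2$ produces exactly $(n-k)/(k(n-1))$ after cancellation. The main technical input is the monotonicity $|\lambda_{2(j+1)}|\leq|\lambda_{2j}|$, since the cruder bound $\|R\|_{\textrm{op}}\leq 1$ (immediate from Cauchy--Schwarz) only yields $\textrm{Var}(X)\leq 1/\mu-1$ and misses the factor $(n-k)/(k(n-1))$ entirely. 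I expect this monotonicity to be read off the explicit product formula for $\lambda_{2j}$ coming from the recursion in \S\ref{sec:radon}, by checking that each consecutive ratio has absolute value at most one via an elementary Gamma-function comparison.
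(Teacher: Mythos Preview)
Your proposal is correct and follows the same spectral strategy as the paper: decompose into spherical harmonics, use that $S_k=R_k^*R_k$ acts by scalars $\lambda_{k,\ell}^2$ on each degree, invoke the monotonicity $\lambda_{k,2\ell}^2\le\lambda_{k,2}^2$ (this is exactly Corollary~\ref{cor:decreasing}, proved via the ratio $\lambda_{k,2\ell+2}^2/\lambda_{k,2\ell}^2=(2\ell+1)(2\ell+n-k)/((2\ell+k)(2\ell+n-1))<1$ extracted from the closed form in Proposition~\ref{prop:general_singular_values}), and then plug in $\lambda_{k,2}^2=(n-k)/(k(n-1))$.

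The one genuine difference is how you obtain $\lambda_{k,2}^2$. The paper specializes the general Gamma-function expression of Proposition~\ref{prop:general_singular_values} to $\ell=1$ and simplifies; you instead test directly on the harmonic $h(x)=x_1^2-1/n$, identify $(R_kh)(H)=\|P_He_1\|^2/k-1/n$, and use the Beta$(k/2,(n-k)/2)$ law of $\|P_He_1\|^2$ to compute $\|R_kh\|^2/\|h\|^2$. Your route is self-contained and avoids the Gegenbauer/Vandermonde machinery for this particular eigenvalue, though that machinery is still what supplies the monotonicity you need, so you do not escape it entirely. One small correction: the ``explicit product formula'' and the monotonicity live in \S\ref{sec:general_k}, not \S\ref{sec:radon}; the latter only gives the integral representation of Proposition~\ref{prop:eigen}.
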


The above analysis could possibly lead to a direct proof of the result of Klartag and Regev, or to generalize \cite{grupel17}, where we average the measures of the intersections with a random subspace and its orthogonal complement.

\textit{\textbf{{Acknowledgement.}}} This paper was written under the supervision of Bo'az Klartag whose guidance, support and patience were invaluable.
In addition, I would like to thank Alon Nishry, Liran Rotem, Boaz Slomka, and Ben Cousins for many fruitful and helpful discussions. Some of this work was done while visiting the Mathematical Sciences Research Institute (MSRI) and is supported by the European Research Council (ERC).

\section{The Radon Transform}\label{sec:radon}
In this section we introduce the Radon transform, an integral transform that averages a function along a given subspace. We denote by $G_{n,k}$ the Grassmanian manifold of all $k$~dimensional subspaces of $\bR^n$.

\begin{defi}
	Let $1\leq k \leq n$. The Radon transform $R_k:L^2\left(S^{n-1}\right)\to L^2\left(G_{n,k}\right)$ is defined by
	\[R_k f(E)=\int_{S^{n-1}\cap E}f(x)d\sigma_E(x),\]
	where $\sigma_E$ is the $SO(n)$ invariant Haar probability measure on $S^{n-1}\cap E$.
\end{defi}

We can see that the radon transform gives a functional version of our geometric question. Taking $f$ to be the normalized indicator of a subset $A\subseteq S^{n-1}$, and $E$ to be a random subspace, we obtain
\[R_k f(E)=\int_{S^{n-1}\cap E}\frac{1_A(x)}{\sigma_{n-1}(A)}d\sigma_E(x)=\frac{\sigma_E(A\cap E)}{\sigma_{n-1}(A)}.\]
Hence, understanding the singular values of the radon transform, will give us a clearer picture of the behavior under random intersections.

In this section, we express the singular values of the Radon transform $R_k$ by a one dimensional integral. 
In later sections we analyze this expression in order to understand random intersections on the sphere by $k$~dimensional subspaces.

In order to find the singular values we introduce the conjugate transform of the Radon transform.

\begin{defi}
	Let $1\leq k \leq n$. The conjugate Radon transform
	$R_k^*:L^2\left(G_{n,k}\right) \to L^2\left(S^{n-1}\right)$ is defined by
	\[R_k^* g(\theta)=\int_{\left\{E;\;\theta\in E\right\}}g(E)d\mu_{k,\theta}(E),\]
	Where $\mu_{k,\theta}$ is the Haar probability measure on
	\[\left\{E\in G_{n,k};\;\theta\in E \right\},\]
	invariant under the $SO(n-1)$ action of rotations on $\bR^n$ that fix $\theta$.
\end{defi}

For every $1\leq k\leq n$ we define the operator $S_k=R_k^*R_k$. By definition,
if $\lambda_{k,i}^2$ is an eigenvalue of $S_k$ then $\lambda_{k,i}>0$ is a singular value of $R_k$.

We use the symmetries of $S_k$ in order to show that the spherical harmonics are its eigenfunctions (for more details about spherical harmonics see \cite{muller66}).

\begin{prop}
	The eigenfunctions of $S_k$ are the spherical harmonics.
\end{prop}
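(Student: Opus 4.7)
The plan is to invoke Schur's lemma applied to the $SO(n)$-action on $L^2(S^{n-1})$. Concretely, I would show that $S_k$ commutes with rotations and then use the fact that each space $\cH_d$ of degree-$d$ spherical harmonics is an irreducible $SO(n)$-representation, which forces $S_k$ to act as a scalar on each $\cH_d$.

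First I would verify the equivariance of $R_k$ and $R_k^*$. For every $U\in SO(n)$, let $T_U f(x)=f(U^{-1}x)$ denote the induced action on $L^2(S^{n-1})$, and $\tilde T_U g(E)=g(U^{-1}E)$ the corresponding action on $L^2(G_{n,k})$. The key point is that the Haar measure $\sigma_E$ is characterized (up to normalization) by $SO(n)$-equivariance: $(U)_*\sigma_E=\sigma_{UE}$. Changing variables $y=U^{-1}x$ in the defining integral gives
\[R_k(T_U f)(E)=\int_{S^{n-1}\cap E}f(U^{-1}x)\,d\sigma_E(x)=\int_{S^{n-1}\cap U^{-1}E}f(y)\,d\sigma_{U^{-1}E}(y)=(\tilde T_U R_k f)(E),\]
so $R_k T_U=\tilde T_U R_k$. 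An identical change of variables using the invariance of $\mu_{k,\theta}$ under $SO(n-1)$-actions fixing $\theta$ (and the obvious transformation $\mu_{k,U\theta}=U_*\mu_{k,\theta}$) yields $R_k^*\tilde T_U=T_U R_k^*$. Composing, $S_k T_U=R_k^*R_k T_U=R_k^*\tilde T_U R_k=T_U R_k^*R_k=T_U S_k$.

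Next, I would invoke the standard Peter--Weyl-type decomposition $L^2(S^{n-1})=\bigoplus_{d\geq 0}\cH_d$, where $\cH_d$ is the space of spherical harmonics of degree $d$, and each $\cH_d$ is an irreducible representation of $SO(n)$ (see \cite{muller66}). Since $S_k$ is bounded and commutes with every $T_U$, its restriction to each $\cH_d$ is an $SO(n)$-intertwiner from the irreducible representation $\cH_d$ to itself. By Schur's lemma, this restriction is a scalar multiple of the identity. Writing $S_k|_{\cH_d}=\lambda_{k,d}^2\,\mathrm{Id}$, we conclude that every spherical harmonic is an eigenfunction of $S_k$, with the eigenvalue depending only on the degree $d$.

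The main (purely conceptual) obstacle is justifying that $R_k$ and $R_k^*$ are indeed $SO(n)$-equivariant, which reduces to the uniqueness of the invariant probability measures $\sigma_E$ and $\mu_{k,\theta}$; the remainder is a clean application of Schur's lemma together with the classical irreducibility of $\cH_d$. A self-adjointness remark ($S_k^*=S_k$) also ensures the eigenvalues $\lambda_{k,d}^2$ are real and nonnegative, consistent with the definition $\lambda_{k,d}$ being a singular value of $R_k$.
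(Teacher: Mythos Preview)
Your proposal is correct and follows essentially the same approach as the paper: verify that $R_k$ and $R_k^*$ (hence $S_k$) intertwine the $SO(n)$-actions on $L^2(S^{n-1})$ and $L^2(G_{n,k})$, and then apply Schur's lemma together with the irreducibility of the spaces of spherical harmonics of fixed degree. The paper's proof is slightly terser (it deduces the equivariance of $R_k^*$ directly from that of $R_k$ by adjointness rather than repeating the change-of-variables argument), but the structure is the same.
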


\begin{proof}
	The space of spherical harmonics of fixed degree is an irreducible representation of $SO(n)$. Hence, by Schur's lemma, it is enough to show the operator $S_k$ commutes with the $SO(n)$ action. Let $U\in SO(n)$, let $f\in L^2(S^{n-1})$ and let $g\in L^2(G_{n,k})$.
	We denote by $T_U$ and $T_U'$ the action of $SO(n)$ (the Koopman representation) on $L^2(S^{n-1})$ and $L^2(G_{n,k})$ respectively,
	\[T_Uf(x)=f(U^{-1}x),\;T_Ug(H)=g(U^{-1}H).\]
	By the definitions of the Radon transform and the measures, we have
	\begin{align*}
		T_U'\left(R_kf\right)(H)&=R_kf(U^{-1}H)=\int_{U^{-1}H\cap S^{n-1}}f(x)d\sigma_{U^{-1}H}(x)=\int_{H\cap S^{n-1}}f(U^{-1}x)d\sigma_{H}(x)\\
		&=R_k\left(T_Uf\right)(H).
	\end{align*}
	Since $R_k^*$ is conjugate to $R_k$ we have 
	\[T_U\left(R_k^*g\right)(\theta)=R_k^*\left(T_U'g\right)(\theta).\]
	Hence,
	\[T_U\left(S_kf\right)(\theta)=T_U\left(R_kR_k^*f\right)(\theta)=R_kR_k^*\left(T_Uf\right)(\theta)=S_k\left(T_Uf\right)(\theta).\]
\end{proof}

By considering the map $\psi:[-1,1]\times S^{n-2}\to S^{n-1}$ defined by $\psi(s,x)=(s,\sqrt{1-s^2}x)$, we obtain the following standard formula for integrating on the sphere,

\begin{prop}\label{prop:fubini}
	Let $f\in L^2\left(S^{n-1}\right)$. Then,
	\[\int_{S^{n-1}}f(x)d\sigma_{n-1}(x)=\tau_{n}\int_{-1}^1\left(\int_{S^{n-2}}
	f\left(t, \sqrt{1-t^2}y\right)d\sigma_{n-2}(y)\right)\left(1-t^2\right)^{(n-3)/2}dt,\]
	where $\tau_{n}=\Gamma(n/2)/(\sqrt{\pi}\Gamma(n/2-1/2))=\sqrt{n/(2\pi)}+O(1/\sqrt{n})$.
\end{prop}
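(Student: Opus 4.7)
The plan is to compute the pull-back of the rotation-invariant probability measure $\sigma_{n-1}$ under the parametrization $\psi:(-1,1)\times S^{n-2}\to S^{n-1}$ given by $\psi(s,y)=(s,\sqrt{1-s^2}\,y)$. This map is a smooth diffeomorphism onto $S^{n-1}\setminus\{\pm e_1\}$, and since the two excluded poles carry zero $\sigma_{n-1}$-mass, it suffices to show that on the image the pull-back of $\sigma_{n-1}$ equals $\tau_n(1-s^2)^{(n-3)/2}\,ds\otimes d\sigma_{n-2}(y)$. Once the density in $s$ is identified, the multiplicative constant $\tau_n$ is forced by the condition that the pushforward has total mass $1$.

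Next I would compute the Jacobian of $\psi$ between the product Riemannian metric on $(-1,1)\times S^{n-2}$ and the induced metric on $S^{n-1}\subseteq\bR^n$. Fix $(s,y)$ and an orthonormal basis $e_1,\dots,e_{n-2}$ of $T_yS^{n-2}=y^\perp\subseteq\bR^{n-1}$. Then $d\psi$ sends $\partial_s$ to $\left(1,-s(1-s^2)^{-1/2}y\right)$, of norm $(1-s^2)^{-1/2}$, and sends each $e_i$ to $\left(0,\sqrt{1-s^2}\,e_i\right)$, of norm $\sqrt{1-s^2}$. The image of $\partial_s$ is orthogonal to each image of $e_i$, since $\langle e_i,y\rangle=0$ forces $\left\langle\left(1,-s(1-s^2)^{-1/2}y\right),\left(0,\sqrt{1-s^2}\,e_i\right)\right\rangle=0$. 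Hence the images form an orthogonal basis of $T_{\psi(s,y)}S^{n-1}$, and the Jacobian is the product of their norms,
\[
J_\psi(s,y)=\frac{1}{\sqrt{1-s^2}}\cdot\left(\sqrt{1-s^2}\right)^{n-2}=(1-s^2)^{(n-3)/2}.
\]

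The final step is to fix the normalization constant. Writing $|S^{k}|=2\pi^{(k+1)/2}/\Gamma((k+1)/2)$ for the unnormalized surface area, and expressing $\sigma_{n-1}=|S^{n-1}|^{-1}\,d\mathrm{vol}_{S^{n-1}}$ together with the analogous identity on $S^{n-2}$, the change-of-variables formula gives
\[
\int_{S^{n-1}}f\,d\sigma_{n-1}=\frac{|S^{n-2}|}{|S^{n-1}|}\int_{-1}^{1}\int_{S^{n-2}}f\left(s,\sqrt{1-s^2}\,y\right)(1-s^2)^{(n-3)/2}\,d\sigma_{n-2}(y)\,ds,
\]
so $\tau_n=|S^{n-2}|/|S^{n-1}|=\Gamma(n/2)/(\sqrt{\pi}\,\Gamma((n-1)/2))$, and the asymptotic $\tau_n=\sqrt{n/(2\pi)}+O(1/\sqrt{n})$ follows from Stirling's formula. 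The argument carries no real obstacle; the only point warranting care is verifying the orthogonality of $d\psi(\partial_s)$ against the $d\psi(e_i)$, which is what produces the clean product form of the Jacobian rather than a messier Gram determinant.
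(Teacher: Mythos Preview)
Your proof is correct and follows precisely the approach the paper indicates: the paper does not give a proof of this proposition but merely states it as ``the following standard formula for integrating on the sphere'' obtained ``by considering the map $\psi:[-1,1]\times S^{n-2}\to S^{n-1}$ defined by $\psi(s,x)=(s,\sqrt{1-s^2}x)$.'' Your computation of the Jacobian of $\psi$ and the normalizing constant $\tau_n=|S^{n-2}|/|S^{n-1}|$ supplies exactly the details the paper omits.
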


Let $P_{\ell}$ denote the Gegenbauer polynomial of degree $\ell$ with respect to the weight function $(1-t^2)^{(n-3)/2}$. We use the standard normalization \cite{muller66},
\[P_{\ell}(1)=\binom{\ell+n-3}{\ell}.\]

It is well known (e.g \cite{muller66}) that we can define a spherical harmonic of degree $\ell$ by using the Gegenbauer polynomial of the same degree.
\begin{prop}
	Let $\xi\in S^{n-1}$, then $f_{\ell}(\theta)=P_{\ell}(\dprod{\theta}{\xi})$ is a spherical harmonic of degree $\ell$.
\end{prop}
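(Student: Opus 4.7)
The plan is to prove the statement by identifying $P_\ell(\dprod{\theta}{\xi})$ as a scalar multiple of the unique zonal spherical harmonic of degree $\ell$ centered at $\xi$. Write $V_\ell \subset L^2(S^{n-1})$ for the space of spherical harmonics of degree $\ell$ and $K_\xi = \{U \in SO(n) : U\xi = \xi\} \cong SO(n-1)$ for the stabilizer of $\xi$ in $SO(n)$.

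First, I would exhibit a non-zero $K_\xi$-invariant element of $V_\ell$. The polynomial $\theta \mapsto \dprod{\theta}{\xi}^\ell$ on $\bR^n$ is manifestly $K_\xi$-invariant. Using the standard $SO(n)$-equivariant decomposition $\bR[\theta_1,\ldots,\theta_n]_\ell = \mathcal{H}_\ell \oplus |\theta|^2 \bR[\theta_1,\ldots,\theta_n]_{\ell-2}$, where $\mathcal{H}_\ell$ is the space of harmonic homogeneous polynomials of degree $\ell$, one writes $\dprod{\theta}{\xi}^\ell = H_\ell(\theta) + |\theta|^2 H_{\ell-2}(\theta) + \cdots$ with each $H_{\ell-2j}$ harmonic and $K_\xi$-invariant. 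The top component $H_\ell$ is non-zero because $\dprod{\theta}{\xi}^\ell$ is not divisible by $|\theta|^2$ in the polynomial ring. Hence $Z_\ell^\xi := H_\ell|_{S^{n-1}} \in V_\ell$ is a non-zero $K_\xi$-invariant spherical harmonic.

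Next I would observe that $K_\xi$ acts transitively on every latitude $\{\theta \in S^{n-1} : \dprod{\theta}{\xi} = t\}$ for $t \in (-1,1)$, so any continuous $K_\xi$-invariant function on $S^{n-1}$ depends on $\theta$ only through $\dprod{\theta}{\xi}$. Therefore $Z_\ell^\xi(\theta) = q_\ell(\dprod{\theta}{\xi})$ for some polynomial $q_\ell$ with $\deg q_\ell \leq \ell$. Combining the $L^2$-orthogonality of $V_\ell$ and $V_m$ for $\ell \neq m$ with Proposition \ref{prop:fubini}, applied with $\xi$ serving as the north pole, yields
\[\int_{-1}^{1} q_\ell(t) q_m(t)(1-t^2)^{(n-3)/2}\,dt = 0 \qquad (\ell \neq m).\]
So $\{q_\ell\}_\ell$ is a sequence of polynomials mutually orthogonal for the weight $(1-t^2)^{(n-3)/2}$. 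A short induction (using $q_\ell \neq 0$ together with orthogonality to all lower-degree $q_m$) forces $\deg q_\ell = \ell$, and by uniqueness of orthogonal polynomials for a given weight (up to scaling) $q_\ell$ is a scalar multiple of the Gegenbauer polynomial $P_\ell$. This gives $P_\ell(\dprod{\theta}{\xi}) \in V_\ell$, as claimed.

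The main obstacle is the non-vanishing $H_\ell \neq 0$: without it, the $K_\xi$-invariant subspace of $V_\ell$ could be trivial and the orthogonal sequence $\{q_\ell\}$ would skip degree $\ell$, breaking the identification with $P_\ell$. The polynomial-divisibility argument handles it, but this is precisely the content that makes the representation-theoretic setup produce the classical Gegenbauer sequence. A minor subtlety is the induction establishing $\deg q_\ell = \ell$, which is routine once $Z_\ell^\xi \neq 0$ is in hand.
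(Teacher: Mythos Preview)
The paper does not prove this proposition; it simply records it as well known and cites M\"uller's monograph \cite{muller66}. Your proposal supplies a complete argument, and it is essentially the classical zonal-harmonic construction one finds in such references: produce a $K_\xi$-invariant element of $V_\ell$ as the harmonic projection of $\dprod{\theta}{\xi}^\ell$, observe that any such invariant must be a function of $\dprod{\theta}{\xi}$ alone, and then identify that function with $P_\ell$ via the orthogonality relations together with the integration formula of Proposition~\ref{prop:fubini}. So your route is not different from the paper's in spirit---it is exactly the textbook proof the paper is pointing to.

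Two small points you may want to tighten. First, the step from ``$Z_\ell^\xi$ depends only on $\dprod{\theta}{\xi}$'' to ``$q_\ell$ is a \emph{polynomial} of degree at most $\ell$'' is not quite immediate from transitivity on latitudes; the cleanest way is to restrict your own decomposition $\dprod{\theta}{\xi}^\ell = H_\ell + |\theta|^2 H_{\ell-2} + \cdots$ to the sphere, obtaining $\dprod{\theta}{\xi}^\ell = Z_\ell^\xi + Z_{\ell-2}^\xi + \cdots$, and then run a downward induction on $\ell$ to see that each $Z_m^\xi$ is a polynomial in $\dprod{\theta}{\xi}$ of degree at most $m$. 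Second, the transitivity of $K_\xi\cong SO(n-1)$ on the latitudes requires $n\ge 3$; for $n=2$ the stabilizer is trivial and the statement reduces to the Chebyshev case, which is elementary but needs a separate line. Neither point affects the soundness of your argument.
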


Since the space of spherical harmonics of a fixed degree is irreducible, they all share the same eigenvalue for $S_k$.
Hence, we can combine both propositions, and express the eigenvalues of $S_k$ by one dimensional integrals of the Gegenbauer polynomials.
We note that when $\ell$ is odd and $f_{\ell}$ is a spherical harmonic of degree $\ell$, it is an odd function. 
Hence, $R_kf_{\ell}\equiv 0$, or equivalently, $\lambda_{k,\ell}=0$ for odd $\ell$. Therefore we need to consider only spherical harmonics of even degree.

\begin{prop}\label{prop:eigen}
	Let $\ell\geq 0$ be even. Let $\lambda_{k,\ell}^2$ be the eigenvalue of $S_k$ that corresponds to spherical harmonics of degree $\ell$. Then,
	\[\lambda_{k,\ell}^2=\frac{\tau_{k}\int_{-1}^{1}P_{\ell}(t)\left(1-t^2\right)^{(k-3)/2}dt}{\binom{\ell+n-3}{\ell}}.\]
\end{prop}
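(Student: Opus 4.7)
Since the spherical harmonics of fixed degree form an irreducible $SO(n)$-representation, all of them share the eigenvalue $\lambda_{k,\ell}^2$ and it suffices to evaluate $S_k$ on a single convenient representative at a single convenient point. The natural choice is the zonal harmonic $f_{\ell}(\theta)=P_{\ell}(\dprod{\theta}{\xi})$ for a fixed $\xi\in S^{n-1}$, evaluated at the pole $\theta=\xi$. Then
\[
\lambda_{k,\ell}^2=\frac{(S_k f_{\ell})(\xi)}{f_{\ell}(\xi)}=\frac{(R_k^* R_k f_{\ell})(\xi)}{P_{\ell}(1)}=\frac{(R_k^* R_k f_{\ell})(\xi)}{\binom{\ell+n-3}{\ell}},
\]
so the whole task reduces to computing $(R_k^* R_k f_{\ell})(\xi)$.

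\textbf{Key step: the inner Radon transform.} I would first compute $(R_k f_{\ell})(E)$ for an arbitrary subspace $E\in G_{n,k}$ that contains $\xi$ (this is all we need, since $R_k^*$ at $\xi$ integrates only over such $E$). Fix such an $E$; then $S^{n-1}\cap E$ is the unit sphere of the $k$-dimensional space $E$, and $\xi\in E$ is a unit vector in it. Applying Proposition \ref{prop:fubini} with $n$ replaced by $k$ and with $\xi$ playing the role of the distinguished axis, I obtain
\[
(R_k f_{\ell})(E)=\tau_{k}\int_{-1}^{1}\left(\int_{S^{k-2}}P_{\ell}(t)\, d\sigma_{k-2}(y)\right)(1-t^2)^{(k-3)/2}\, dt.
\]
The inner integrand depends only on $t=\dprod{x}{\xi}$, so the inner integral equals $P_{\ell}(t)$ and we get
\[
(R_k f_{\ell})(E)=\tau_{k}\int_{-1}^{1}P_{\ell}(t)(1-t^2)^{(k-3)/2}\, dt,
\]
a constant independent of the particular $E$ through $\xi$.

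\textbf{Conclusion.} Because $R_k f_{\ell}$ is constant on $\{E\in G_{n,k}:\xi\in E\}$ and $\mu_{k,\xi}$ is a probability measure on that set, we immediately conclude
\[
(S_k f_{\ell})(\xi)=\int_{\{E:\xi\in E\}}(R_k f_{\ell})(E)\, d\mu_{k,\xi}(E)=\tau_{k}\int_{-1}^{1}P_{\ell}(t)(1-t^2)^{(k-3)/2}\, dt,
\]
and dividing by $f_{\ell}(\xi)=\binom{\ell+n-3}{\ell}$ gives the claimed formula.

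\textbf{Where the work is.} There is no serious obstacle: the calculation is driven entirely by the observation that evaluating a zonal harmonic at its own pole removes all $SO(n-1)$ dependence, making $R_k f_{\ell}$ constant precisely on the fiber over which $R_k^*$ averages. The only small point that has to be checked carefully is that the Fubini identity of Proposition \ref{prop:fubini} may be invoked in ambient dimension $k$ on the sphere of $E$; this is immediate because that identity is a coordinate computation depending only on the dimension of the sphere being integrated over, and the $SO(k)$-invariance of $\sigma_E$ allows us to choose $\xi$ as the preferred axis inside $E$.
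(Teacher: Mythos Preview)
Your proof is correct and follows essentially the same approach as the paper: choose the zonal harmonic $f_{\ell}(\theta)=P_{\ell}(\dprod{\theta}{\xi})$, evaluate $S_k f_{\ell}$ at the pole $\xi$, apply the slicing formula of Proposition~\ref{prop:fubini} in ambient dimension $k$ inside each $E\ni\xi$, and observe that the integrand depends only on $t=\dprod{x}{\xi}$ so both the inner spherical average and the outer Grassmannian average are over constants. The paper writes out the double integral first and then applies Fubini, while you compute $R_k f_{\ell}(E)$ separately and then average, but these are the same argument.
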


\begin{proof}
	We choose the spherical harmonic $f_{\ell}(x)=P_{\ell}\left(\dprod{x}{e_1}\right)$, where $e_1=(1,0,\ldots,0)$.
	By definition of $\lambda_{k,\ell}$ we have,
	\[\left(S_kf_{\ell}\right)(\theta)=\lambda_{k,\ell}^2f_{\ell}(\theta).\]
	By choosing $\theta=e_1$ and the normalization of the Gegenbauer polynomials, we have
	\[\left(S_kf_{\ell}\right)(e_1)=\lambda_{k,\ell}^2P_{\ell}(1)=\binom{\ell+n-3}{\ell}\lambda_{k,\ell}^2.\]
	Hence, we need to show that
	\[\left(S_kf_{\ell}\right)(e_1)=
	\tau_k\int_{-1}^{1}P_{\ell}(t)\left(1-t^2\right)^{(k-3)/2}dt.\]
	By definition of the operators $R_k$ and $R_k^*$ we have,
	\[\left(S_kf_{\ell}\right)(e_1)=\left(R_k^*R_kf_{\ell}\right)(e_1)=\int_{\{E;\;e_1\in E\}}\int_{S^{n-1}\cap E}f_{\ell}(x)d\sigma_E(x)d\mu_{k,e_1}(E).\]
	Using Proposition (\ref{prop:fubini}) on the inner integral, we have
	\[\left(S_kf_{\ell}\right)(e_1)=\tau_k\int_{\{E;\;e_1\in E\}}\int_{-1}^{1}\int_{S^{n-1}\cap E \cap e_1^{\perp}}f_{\ell}(t,\sqrt{1-t^2}y)d\sigma_{E \cap e_1^{\perp}}(y)\left(1-t^2\right)^{(k-3)/2}dtd\mu_{k,e_1}.\]
	By our choice of $f_{\ell}$ we have $f_{\ell}(t,y)=P_{\ell}(t)$.
	Hence, the integrals with respect to $\sigma_{E \cap e_1^{\perp}}$ and $\mu_{k,e_1}$ are on a constant function, and the proof is complete.
\end{proof}

In \cite{grupel17} we studied the random variable
\[\sqrt{\int_{S^{n-1}\cap H}fd\sigma_{H}\int_{S^{n-1}\cap H^{\perp}}fd\sigma_{H^{\perp}}},\]
where $\dim(H)=n/2$. Using a similar method as above we can calculate the second moment of this random variable.

\begin{theorem}\label{thm:correlation}
	Let $1\leq k\leq n-1$. For any $f,g\in L^2(S^{n-1})$ we have,
	\[\left|\int R_kf(H)R_{n-k}g(H^{\perp})-\int fd\sigma_{n-1}\int gd\sigma_{n-1}\right|\leq\sum_{j=1}^{\infty}\binom{\ell+n/2-2}{\ell}\binom{2\ell+n-3}{2\ell}^{-1}\norm{f_{2\ell}}\norm{g_{2\ell}},\]
	and
	\[\int R_kf(H)R_{n-k}f(H^{\perp})=\sum_{j=0}^{\infty}(-1)^{\ell}\binom{\ell+n/2-2}{\ell}\binom{2\ell+n-3}{2\ell}^{-1}\norm{f_{2\ell}}^2,\]
	where $f_{2\ell}$ and $g_{2\ell}$ are the projections of $f$ and $g$ to the space of spherical harmonics of degree $2\ell$.
\end{theorem}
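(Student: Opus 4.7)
The plan is to diagonalize the bilinear form by Schur's lemma and then identify the resulting eigenvalues as a ratio of Gegenbauer values. Write $f=\sum_{\ell\geq 0}f_{\ell}$ and $g=\sum_{\ell\geq 0}g_{\ell}$ in the spherical-harmonic basis, and introduce the auxiliary operator $T:L^{2}(S^{n-1})\to L^{2}(G_{n,k})$ defined by $Tg(H)=R_{n-k}g(H^{\perp})$, which is simply $R_{n-k}$ composed with the isometry between $L^2(G_{n,n-k})$ and $L^2(G_{n,k})$ induced by $H\mapsto H^{\perp}$. Then
\[\int_{G_{n,k}}R_{k}f(H)R_{n-k}g(H^{\perp})\,dH=\langle T^{*}R_{k}f,g\rangle_{L^{2}(S^{n-1})}.\]
Since $R_{k}$, $R_{n-k}$ and $H\mapsto H^{\perp}$ are all $SO(n)$-equivariant, so is $T^{*}R_{k}$; Schur's lemma then forces it to act as a scalar $\beta_{\ell}$ on the irreducible space of degree-$\ell$ spherical harmonics, and the integral collapses to $\sum_{\ell\geq 0}\beta_{\ell}\langle f_{\ell},g_{\ell}\rangle$.

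To compute $\beta_{\ell}$ I would imitate the trick from the proof of Proposition \ref{prop:eigen}: take the zonal test function $f_{\ell}(x)=P_{\ell}(\langle x,e_{1}\rangle)$, so that the identity $T^{*}R_{k}f_{\ell}=\beta_{\ell}f_{\ell}$ evaluated at $e_{1}$ yields $\beta_{\ell}P_{\ell}(1)=T^{*}R_{k}f_{\ell}(e_{1})$. Unwinding the definitions of $T^{*}$ and $R_{n-k}^{*}$, and using the measure-preserving bijection $E\leftrightarrow E^{\perp}$ between $\{E\in G_{n,n-k}:e_{1}\in E\}$ and $\{H\in G_{n,k}:H\subseteq e_{1}^{\perp}\}$, the right-hand side becomes the average of $R_{k}f_{\ell}(H)$ over $k$-planes $H$ lying inside $e_{1}^{\perp}$. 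For any such $H$, every $x\in S^{n-1}\cap H$ satisfies $\langle x,e_{1}\rangle=0$, so $R_{k}f_{\ell}(H)$ collapses to the constant $P_{\ell}(0)$. Hence $\beta_{\ell}=P_{\ell}(0)/P_{\ell}(1)$, which vanishes for odd $\ell$; for $\ell=2m$ the standard Gegenbauer identity $P_{2m}(0)=(-1)^{m}\binom{m+n/2-2}{m}$ together with $P_{2m}(1)=\binom{2m+n-3}{2m}$ gives precisely the coefficient appearing in the theorem.

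This proves the equality for $f=g$ directly. For the first estimate I would split off the $\ell=0$ term, which evaluates to $\beta_{0}\langle f_{0},g_{0}\rangle=\int f\,d\sigma_{n-1}\int g\,d\sigma_{n-1}$ (since $\beta_{0}=P_{0}(0)/P_{0}(1)=1$), and then apply Cauchy--Schwarz to each remaining inner product $\langle f_{2\ell},g_{2\ell}\rangle$. The main obstacle is the measure-theoretic bookkeeping when dualizing $H\mapsto H^{\perp}$ through the adjoint $R_{n-k}^{*}$: one must check that the pushforward of $\mu_{n-k,e_{1}}$ under $E\mapsto E^{\perp}$ is indeed the $SO(n-1)$-invariant probability measure on $k$-planes inside $e_{1}^{\perp}$. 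Once this is verified, the reduction of $T^{*}R_{k}f_{\ell}(e_{1})$ to the single scalar $P_{\ell}(0)$ is immediate, and everything else is pure assembly.
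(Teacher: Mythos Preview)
Your proposal is correct and follows essentially the same route as the paper: both arguments introduce the rotation-equivariant operator obtained by composing the two Radon transforms through the involution $H\mapsto H^{\perp}$, invoke Schur's lemma to reduce to a scalar on each harmonic subspace, and evaluate that scalar by testing against the zonal harmonic $P_{\ell}(\langle\,\cdot\,,e_{1}\rangle)$ at $e_{1}$, where orthogonality to $e_{1}$ collapses the integral to $P_{\ell}(0)$. The only cosmetic difference is that the paper works with $R_{k}^{*}\varphi R_{n-k}$ while you work with its adjoint $T^{*}R_{k}$; since both operators are real and equivariant they share the same eigenvalues, and the remaining steps (splitting off $\ell=0$, Cauchy--Schwarz on each component) are identical.
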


\begin{proof}
	Let $\varphi:G_{n,k}\to G_{n,n-k}$ be the involution
	\[\varphi(H)=H^{\perp}.\]
	As before, the operator $R_k^*\varphi R_{n-k}$ commutes with rotations, hence it is diagonalized by the spherical harmonics. Let $P_{2\ell}$ be the Gegenbauer polynomial of degree $2\ell$ and $f_{2\ell}$ the spherical harmonics defined by $f_{2\ell}(x)=P_{2\ell}(x_1)$. Denoting by $\eta_{2\ell}$ the corresponding eigenvalue, we have
	\[\eta_{2\ell}P_{2\ell}(1)=\eta_{2\ell}f_{2\ell}(e_1)=R_k^*\varphi R_{n-k}f_{2\ell}(e_1)=\int_{\{H;\;e_1\in H\}}\int_{S^{n-1}\cap H^{\perp}}f_{2\ell}d\sigma_{H^{\perp}}d\mu_{e_1}.\]
	Since $e_1\in H$ for every $x\in H^{\perp}$ we have $\dprod{x}{e_1}=0$. Hence, the inner integral is on the constant function $f_{2\ell}(0)$, and we obtain
	\[\eta_{2\ell}P_{2\ell}(1)=f_{2\ell}(0)=P_{2\ell}(0).\]
	Hence, by using \cite[equation 4.7.31]{szego75}
	\[\eta_{2\ell}=\frac{P_{2\ell}(0)}{P_{2\ell}(1)}=(-1)^{\ell}\binom{\ell+n/2-2}{\ell}\binom{2\ell+n-3}{2\ell}^{-1}.\]
	Let $f$ and $g$ be a spherical harmonics of degree $2\ell$, we have
	\[\dprod{R_kf}{\varphi R_{n-k}g}=\dprod{f}{R_k^*\varphi R_{n-k}g}\leq|\eta_{2\ell}|\norm{f}\norm{g}.\]
	If $f=g$ then we can get equality
	\[\dprod{R_kf}{\varphi R_{n-k}f}=\eta_{2\ell}\norm{f}^2.\]
	In order to finish the proof, we remember the spherical harmonics of different degrees are orthogonal to each other.
\end{proof}

\begin{remark}
	Theorem \ref{thm:correlation} shows that the integral over the Grassmanian
	\[\int R_kf(H)R_{n-k}f(H^{\perp}),\]
	does not depend on the dimension of $H$.
\end{remark}

Note that $P_{2\ell}(0)/P_{2\ell}(1)$ are the singular values of the Radon transform $R_{n-1}$. Hence, we can repeat the analysis of Klartag and Regev for estimating the norm of the projections and the sum. We get,
\begin{cor}
	Let $f,g:S^{n-1}\to[0,\infty)$. Assume that $\int f=\int g=1$, then for all $1\leq k \leq n-1$
	\[\left|\int R_kf(H)R_{n-k}g(H^{\perp})-1\right|\leq C\frac{\log(2\norm{f}_{\infty})\log(2\norm{g}_{\infty})}{n}.\]
\end{cor}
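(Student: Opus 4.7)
The starting point is the preceding remark, which notes that the integral $\int R_k f(H) R_{n-k} g(H^{\perp})\,dH$ does not depend on $k$. This is immediate from the proof of Theorem~\ref{thm:correlation}: the operator $R_k^* \varphi R_{n-k}$ is diagonalized by spherical harmonics and its eigenvalues $\eta_{2\ell}$ depend only on $n$, so the bilinear form
\[I_k(f,g) = \int_{G_{n,k}} R_k f(H)\, R_{n-k} g(H^{\perp})\,dH = \sum_{\ell \geq 0} \eta_{2\ell}\langle f_{2\ell}, g_{2\ell}\rangle\]
is independent of $k$. Consequently it suffices to establish the bound for one favorable choice; I would take $k = \lfloor n/2 \rfloor$, so that \emph{both} $R_k$ and $R_{n-k}$ are Radon transforms onto subspaces of dimension linear in $n$---the regime where Klartag and Regev's concentration estimates are sharpest.

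The second step is a short Cauchy--Schwarz manipulation. Using Haar duality $\int_{G_{n,k}} R_k f\,dH = \int_{S^{n-1}} f = 1$ (and the corresponding identity for $g$), together with the fact that the involution $H \mapsto H^{\perp}$ preserves the Haar measure on the Grassmannian, I would rewrite
\[I_k(f,g) - 1 = \int_{G_{n,k}} \bigl[R_k f(H) - 1\bigr]\bigl[R_{n-k} g(H^{\perp}) - 1\bigr]\,dH\]
and apply Cauchy--Schwarz to obtain
\[\bigl|I_k(f,g) - 1\bigr| \leq \norm{R_k f - 1}_{L^2(G_{n,k})} \cdot \norm{R_{n-k} g - 1}_{L^2(G_{n,n-k})}.\]

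The third step is to import the $L^2$-contraction estimate for the Radon transform from Klartag and Regev: for a nonnegative function $f$ on $S^{n-1}$ with $\int f = 1$ and $M = \norm{f}_{\infty}$, their analysis (treating the hyperplane case via Fourier expansion and then iterating to lower-dimensional subspaces) yields $\norm{R_k f - 1}_{L^2} \leq C\log(2M)/\sqrt{n}$ whenever $k$ is a linear fraction of $n$. Applying this estimate to both $f$ and $g$, with $k = \lfloor n/2 \rfloor$ (so $n-k$ is also of linear order), multiplies the two bounds and produces the advertised $C\log(2\norm{f}_{\infty})\log(2\norm{g}_{\infty})/n$.

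The main obstacle is step three: checking that Klartag and Regev's iterated argument indeed produces a bound with constant independent of $n$ at the scale $k = n/2$, rather than merely at $k = n-1$. A naive attempt to avoid this step by estimating the sum $\sum_{\ell\geq 1} |\eta_{2\ell}| \norm{f_{2\ell}} \norm{g_{2\ell}}$ from Theorem~\ref{thm:correlation} directly appears to lose: Cauchy--Schwarz in the form $\sum |\eta_{2\ell}| a_\ell b_\ell \leq (\sum \eta_{2\ell}^2 a_\ell^2)^{1/2}(\sum b_\ell^2)^{1/2}$ forces at best one $\sqrt{\norm{g}_\infty}$ factor in place of the desired $\log(\norm{g}_{\infty})$. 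Exploiting $k$-invariance to hand off the asymmetry to Klartag--Regev on \emph{both} sides simultaneously is what makes the symmetric logarithmic bound achievable.
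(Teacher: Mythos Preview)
Your approach is correct and genuinely different from the paper's. The paper does \emph{not} pass to $k=\lfloor n/2\rfloor$ or use Cauchy--Schwarz on the Grassmannian; instead it does exactly the thing you dismiss in your last paragraph. The one-line proof in the paper is the observation that $|\eta_{2\ell}|=|P_{2\ell}(0)/P_{2\ell}(1)|$ coincides with the singular values of the hyperplane Radon transform $R_{n-1}$, so Klartag--Regev's \emph{hyperplane} analysis applies verbatim to the sum $\sum_{\ell\ge 1}\eta_{2\ell}\langle f_{2\ell},g_{2\ell}\rangle$ from Theorem~\ref{thm:correlation}. Your objection that ``Cauchy--Schwarz forces at best one $\sqrt{\norm{g}_\infty}$'' is aimed at the wrong target: the Klartag--Regev argument is not naive Cauchy--Schwarz on the sum, but a hypercontractivity bound on each $\norm{f_{2\ell}}$ separately (their Lemma~5.3), combined with the decay of $|\eta_{2\ell}|$. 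That is what produces a logarithm on \emph{both} sides simultaneously.

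The trade-off between the two routes is this. The paper's route is shorter and only needs the first ingredient of \cite{klartag+regev11} (the hyperplane step), not the iterated descent to $k=n/2$; on the other hand it requires the reader to unpack that ingredient. Your route is a clean black-box reduction: once you know the full Klartag--Regev concentration theorem for $k$ of linear size, the corollary is immediate from $k$-invariance plus Cauchy--Schwarz. Your ``main obstacle'' is not really an obstacle---Klartag--Regev's iterated bound at $k=n/2$ does give $\norm{R_{n/2}f-1}_{L^2}\le C\log(2\norm{f}_\infty)/\sqrt{n}$ with a dimension-free constant (indeed, integrating their tail bound gives this or better). So both arguments are complete; you simply invoke more of \cite{klartag+regev11} than the paper does.
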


\section{Random Geodesics on the Sphere}\label{sec:sphere_geodesics}
The goal of this section is to prove Theorem \ref{thm:sphere_geodesics}. We do this by calculating the singular values of the Radon transform $R_2$ and use them to bound the variance of the random variable $\sigma_{L}(A\cap L)/\sigma_{n-1}(A)$.

There are two natural ways to get a random geodesics on the sphere. The first is intersecting the sphere with a random $2$~dimensional subspace. The second is by choosing a uniform random point $X\in S^{n-1}$ and a uniform direction $Y\in T_XS^{n-1}$ of unit length, and define $L$ as the geodesic curve that starts at $X$ in the direction $Y$. We note that these two methods have the same distribution.

\begin{prop}\label{prop:2singular}
	The eigenvalues of $S_2$ are
	\[\lambda_{2,2\ell}^2=\binom{\ell+n/2-2}{\ell}^2\binom{2\ell+n-3}{2\ell}^{-1}.\]
\end{prop}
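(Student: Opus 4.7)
The plan is to specialize Proposition \ref{prop:eigen} to $k=2$, for which $\tau_2 = \Gamma(1)/(\sqrt{\pi}\Gamma(1/2)) = 1/\pi$, and therefore the claim reduces to proving
\[
I_{2\ell} := \int_{-1}^{1} P_{2\ell}(t)(1-t^2)^{-1/2}\,dt = \pi\binom{\ell+n/2-2}{\ell}^{2}.
\]
The weight $(1-t^2)^{-1/2}$ on the left is precisely the Chebyshev weight, while $P_\ell$ is the Gegenbauer polynomial with parameter $\alpha=(n-2)/2$ (so that $2\alpha-1=n-3$ matches the weight $(1-t^2)^{(n-3)/2}$ used to define the $P_\ell$). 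This compatibility is what makes an explicit evaluation available.

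First I would package the family $\{I_\ell\}$ into a single generating function via the standard identity
\[
\sum_{\ell\geq 0} P_\ell(t)\, r^\ell = (1-2tr+r^2)^{-\alpha},\qquad \alpha=\tfrac{n-2}{2}.
\]
Multiplying by $(1-t^2)^{-1/2}$, integrating term-by-term, and substituting $t=\cos\theta$ gives
\[
\sum_{\ell\geq 0} I_\ell\, r^\ell = \int_0^\pi (1-2r\cos\theta+r^2)^{-\alpha}\,d\theta = \int_0^\pi \bigl|1-re^{i\theta}\bigr|^{-2\alpha}\,d\theta.
\]
Next, I would expand each factor as a binomial series $(1-re^{\pm i\theta})^{-\alpha}=\sum_{n\geq 0}\binom{\alpha+n-1}{n}r^n e^{\pm i n\theta}$, multiply, and use symmetry to extend to $[0,2\pi]$, so that the orthogonality $\int_0^{2\pi}e^{i(n-m)\theta}d\theta = 2\pi\delta_{nm}$ kills all off-diagonal terms. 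This yields
\[
\sum_{\ell\geq 0} I_\ell\, r^\ell = \pi\sum_{n\geq 0}\binom{\alpha+n-1}{n}^{2} r^{2n}.
\]
Comparing coefficients shows $I_\ell=0$ for odd $\ell$ (consistent with the parity observation made just before Proposition \ref{prop:eigen}) and $I_{2\ell}=\pi\binom{\ell+n/2-2}{\ell}^{2}$.

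Finally I would substitute this into Proposition \ref{prop:eigen}: the factor $\tau_2=1/\pi$ cancels the $\pi$, the denominator $\binom{2\ell+n-3}{2\ell}$ stays, and the stated formula drops out. The only conceptual step is recognizing that the Chebyshev weight makes the integral a constant-term extraction against $|1-re^{i\theta}|^{-2\alpha}$; everything else is bookkeeping. The main (minor) obstacle is simply being careful about the Gegenbauer normalization in Müller's convention, so that the series $\sum P_\ell(t) r^\ell$ really does equal $(1-2tr+r^2)^{-(n-2)/2}$ without any extra constant.
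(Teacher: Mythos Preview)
Your proof is correct and follows essentially the same route as the paper: specialize Proposition~\ref{prop:eigen} to $k=2$, pass to $t=\cos\theta$, and evaluate $\int_0^\pi P_{2\ell}(\cos\theta)\,d\theta$ as $\pi$ times the constant Fourier coefficient of $P_{2\ell}(\cos\theta)$. The only difference is that the paper quotes this coefficient from Szeg\H{o}'s formula (equation~4.9.19), whereas your generating-function-plus-orthogonality computation is precisely a self-contained derivation of that same constant term; as a cosmetic matter, you should avoid reusing $n$ as a summation index when it already denotes the ambient dimension.
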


\begin{proof}
	By Proposition \ref{prop:eigen} we have
	\[\lambda_{k,\ell}^2=\frac{\tau_{k}\int_{-1}^{1}P_{\ell}(t)\left(1-t^2\right)^{(k-3)/2}dt}{\binom{\ell+n-3}{\ell}}.\]
	Using the change of variables $t=\cos\theta$, we obtain
	\[\lambda_{2,2\ell}^2=\frac{\tau_{2}\int_{0}^{\pi}P_{2\ell}(\cos \theta)d\theta}{\binom{2\ell+n-3}{2\ell}}.\]
	We have
	\[\tau_2=\left(\int_{-1}^1\frac{1}{\sqrt{1-s^2}}ds\right)^{-1}=1/\pi,\]
	and by the zero coefficient of the trigonometric polynomial $P_{2\ell}(\cos \theta)$ (see \cite[equation 4.9.19]{szego75}), we have
	\[\int_{0}^{\pi}P_{2\ell}(\cos \theta)d\theta=\pi \binom{\ell+n/2-2}{\ell}^2.\]
\end{proof}

Next we prove that the biggest eigenvalue of $S_2$ is $\lambda_{2,2}^2$.

\begin{prop}\label{prop:decreasing}
	The eigenvalues of $S_2$, $\{\lambda_{2,2\ell}^2\}$ is a decreasing sequence.
\end{prop}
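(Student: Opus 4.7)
The plan is to prove monotonicity directly from the closed-form expression in Proposition \ref{prop:2singular} by examining the ratio of consecutive eigenvalues. That is, I will compute
\[\frac{\lambda_{2,2\ell+2}^2}{\lambda_{2,2\ell}^2}
=\left(\frac{\binom{\ell+n/2-1}{\ell+1}}{\binom{\ell+n/2-2}{\ell}}\right)^{\!2}
\cdot \frac{\binom{2\ell+n-3}{2\ell}}{\binom{2\ell+n-1}{2\ell+2}},\]
and show the result is at most $1$.

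For the first factor, the routine binomial identity
\[\frac{\binom{\ell+n/2-1}{\ell+1}}{\binom{\ell+n/2-2}{\ell}}=\frac{\ell+n/2-1}{\ell+1},\]
together with the observation that $2(\ell+n/2-1)=2\ell+n-2$, gives
\[\left(\frac{\ell+n/2-1}{\ell+1}\right)^{\!2}=\frac{(2\ell+n-2)^2}{4(\ell+1)^2}.\]
For the second factor, expanding the binomials yields
\[\frac{\binom{2\ell+n-3}{2\ell}}{\binom{2\ell+n-1}{2\ell+2}}=\frac{(2\ell+1)(2\ell+2)}{(2\ell+n-2)(2\ell+n-1)}.\]
Multiplying and using $2\ell+2=2(\ell+1)$, the ratio simplifies to
\[\frac{\lambda_{2,2\ell+2}^2}{\lambda_{2,2\ell}^2}=\frac{(2\ell+1)(2\ell+n-2)}{(2\ell+2)(2\ell+n-1)}.\]

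Writing $a=2\ell+1$ and $b=2\ell+n-2$ (both nonnegative for $n\geq 2$ and $\ell\geq 0$), the claim becomes $ab\leq(a+1)(b+1)$, which is trivial since $(a+1)(b+1)=ab+a+b+1$. This gives the desired monotonicity, in fact strictly. There is no real obstacle here beyond careful bookkeeping of binomial coefficients; the content of the proposition is that after the algebraic simplification above, the ratio manifestly collapses to a product of two fractions each less than $1$.
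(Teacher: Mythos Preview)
Your proof is correct and follows essentially the same approach as the paper: both compute the ratio $\lambda_{2,2\ell+2}^2/\lambda_{2,2\ell}^2$ from Proposition~\ref{prop:2singular}, simplify it to $\dfrac{(2\ell+1)(2\ell+n-2)}{(2\ell+2)(2\ell+n-1)}$, and observe that this is at most $1$. The only cosmetic difference is in the final check---the paper rewrites the inequality as $n+4\ell\geq 0$, whereas you use $ab\leq (a+1)(b+1)$---but these are the same triviality.
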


\begin{proof}
	By Proposition \ref{prop:2singular}, we have
	\[\frac{\lambda_{2,2\ell+2}^2}{\lambda_{2,2\ell}^2}=\frac{\binom{\ell+n/2-1}{\ell+1}^2\binom{2\ell+n-3}{2\ell}}{\binom{2\ell+n-1}{2\ell+2}\binom{\ell+n/2-2}{\ell}^2}=\frac{(2\ell+1)(n+2\ell-2)}{(2\ell+2)(n+2\ell-1)}.\]
	Hence, for the sequence to be decreasing, we need the ratio to be at most one. This condition is equivalent to
	\[n+4\ell\geq 0,\]
	which is always satisfied.
\end{proof}

By Proposition \ref{prop:2singular} we have
\[\lambda_{2,2}^2=\frac{n-2}{2(n-1)}.\]
Combining this with Proposition \ref{prop:decreasing}, we can give an upper bound for the variance of a Radon transform of a function by the variance of the original function.

\begin{cor}\label{cor:var}
	Let $f\in L^2\left(S^{n-1}\right)$ such that $\int_{S^{n-1}}f(x)d\sigma_{n-1}=0$. Then
	\[\norm{R_2f}_{L^2(G_{n,2})}\leq \frac{1}{\sqrt{2}}\norm{f}_{L^2(S^{n-1})}.\]
\end{cor}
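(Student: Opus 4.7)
The plan is to decompose $f$ in the orthogonal basis of spherical harmonics and then apply what we have proved about the eigenvalues of $S_2 = R_2^* R_2$.

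First I would write $f = \sum_{\ell \geq 0} f_\ell$, where $f_\ell$ is the orthogonal projection onto spherical harmonics of degree $\ell$. The mean-zero hypothesis kills the degree-zero component, so $f_0 = 0$. Since a spherical harmonic of odd degree is an odd function on $S^{n-1}$ and a geodesic is symmetric under the antipodal map, $R_2 f_\ell \equiv 0$ for odd $\ell$ (this observation was already recorded just before Proposition \ref{prop:eigen}). Therefore only the even-degree components with $\ell \geq 2$ contribute.

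Next I would use that spherical harmonics of different degrees are $L^2$-orthogonal and that each degree-$\ell$ subspace is an $S_2$-eigenspace with eigenvalue $\lambda_{2,\ell}^2$. Combined with $\|R_2 g\|_{L^2(G_{n,2})}^2 = \langle g, S_2 g\rangle_{L^2(S^{n-1})}$, this yields
\[
\|R_2 f\|_{L^2(G_{n,2})}^2 = \sum_{\ell \geq 1} \lambda_{2,2\ell}^2 \, \|f_{2\ell}\|_{L^2(S^{n-1})}^2.
\]
Now I invoke Proposition \ref{prop:decreasing}, which says the sequence $\{\lambda_{2,2\ell}^2\}_{\ell \geq 0}$ is decreasing, so for $\ell \geq 1$ the factor $\lambda_{2,2\ell}^2$ is bounded by $\lambda_{2,2}^2 = (n-2)/(2(n-1))$.

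Finally, bounding the sum gives
\[
\|R_2 f\|_{L^2(G_{n,2})}^2 \leq \frac{n-2}{2(n-1)} \sum_{\ell \geq 1} \|f_{2\ell}\|_{L^2(S^{n-1})}^2 \leq \frac{n-2}{2(n-1)} \|f\|_{L^2(S^{n-1})}^2 \leq \frac{1}{2}\|f\|_{L^2(S^{n-1})}^2,
\]
which is the claim. There is no real obstacle; the whole statement is essentially a spectral repackaging of Propositions \ref{prop:2singular} and \ref{prop:decreasing}, with the mean-zero assumption used precisely to discard the top eigenvalue $\lambda_{2,0}^2 = 1$ (corresponding to constants) so that the next eigenvalue $\lambda_{2,2}^2 \leq 1/2$ governs the operator norm on the mean-zero subspace.
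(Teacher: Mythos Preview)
Your argument is correct and is exactly the intended one: the paper states this result as an immediate corollary of Proposition~\ref{prop:decreasing} together with the computed value $\lambda_{2,2}^2=(n-2)/(2(n-1))\leq 1/2$, and your write-up simply spells out the spectral decomposition that makes this implication precise.
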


Using Corollary \ref{cor:var} we can prove Theorem \ref{thm:sphere_geodesics}.

\begin{proof}
	Define $f(x)=1_A(x)/\sigma_{n-1}(A)-1$ and $X=\sigma_{L}(A\cap L)/\sigma_{n-1}(A)$. By Corollary \ref{cor:var} we have
	\[\norm{R_2f}^2\leq\frac{1}{2}\norm{f}^2=\frac{1}{2}\left(\frac{1}{\sigma_{n-1}(A)}-1\right).\]
	On the other hand,
	\[\norm{R_2f}^2=\int_{G_{n,2}}\left(\int_{S^{n-1}\cap H}f(x)d\sigma_H(x)\right)^2d\mu_2=\var\left(X\right).\]
	By Markov's inequality
	\[\bP_{L}\left(\left|X-1\right|\geq t\right)\leq \frac{\var X}{t^2}\leq\frac{1-\sigma_{n-1}(A)}{2\sigma_{n-1}(A)t^2}.\]
	Setting $\sigma_{n-1}(A)=1/2$ and $t=1/2^{1/3}$ finishes the proof.
\end{proof}

\begin{remark}\label{rmk:sharp}
	By looking at the set $A=\{x\in S^{n-1};\; |x_1|\geq T\}$ where $T\approx c/\sqrt{n}$ is chosen such that $\sigma_{n-1}(A)=1/2$ we see that the probability bound in Theorem \ref{thm:sphere_geodesics} cannot be improved to an expression that decays with the dimension $n$. See Figure \ref{fig:sphere_simulation} for simulation results of this example.
\end{remark}

\begin{remark}
	As we can see in  Figure \ref{fig:sphere_simulation}, there are no small ball estimates on either side and there is no apparent information about the shape of the distribution.
\end{remark}

\begin{figure}
	\begin{subfigure}{.5\textwidth}
		\centering
		\includegraphics[width=.95\linewidth]{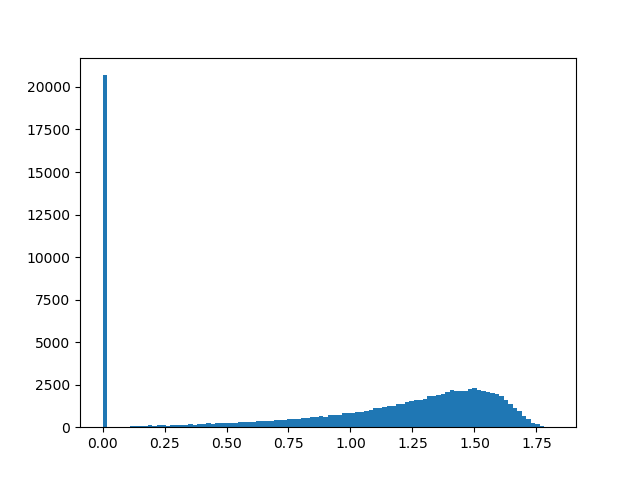}
		\caption{$A=\{x\in S^{n-1};\;|x_1|\geq c/\sqrt{n}\}$}
		\label{fig:sphere_two_caps}
	\end{subfigure}%
	\begin{subfigure}{.5\textwidth}
		\centering
		\includegraphics[width=.95\linewidth]{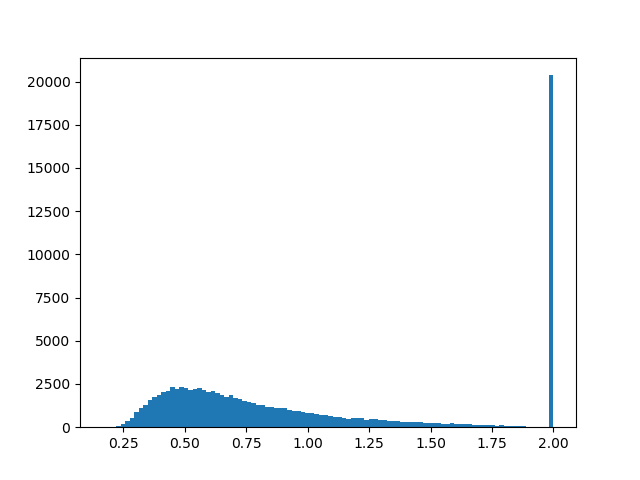}
		\caption{$A=\{x\in S^{n-1};\;|x_1|\leq c/\sqrt{n}\}$}
		\label{fig:sphere_central}
	\end{subfigure}
	\begin{subfigure}{.5\textwidth}
		\centering
		\includegraphics[width=.95\linewidth]{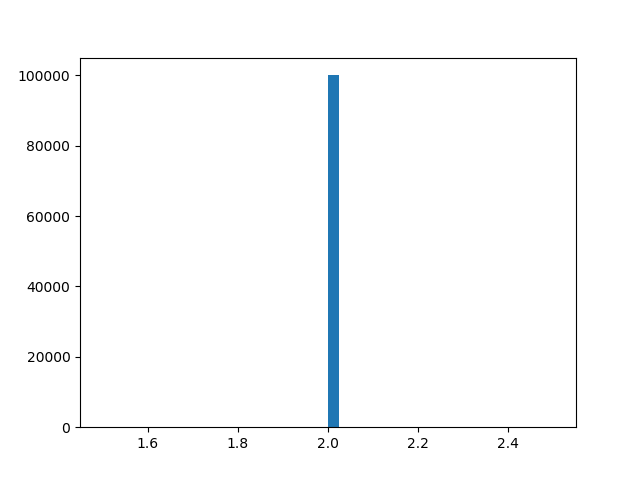}
		\caption{$A=\{x\in S^{n-1};\;x_1\geq 0\}$}
		\label{fig:sphere_cap}
	\end{subfigure}
	\caption{Histogram of simulations on the sphere in dimension $1000$ with $10^5$ samples for different sets $A$.}
	\label{fig:sphere_simulation}
\end{figure}

\section{Random Geodesics in a Convex Body}\label{sec:convex_geodesics}
In this section we prove that there is no analogous theorem to Theorem \ref{thm:sphere_geodesics} when we choose random geodesics inside a convex body.
We show that for every convex body, we can find a set such that the relative length of the intersection with a random line is close to a zero-one law.

There are different distributions for geodesics inside a convex body. We focus on the following model; choose a random point $X\in K$ distributed uniformly, and a random direction $\theta\in S^{n-1}$ distributed uniformly on the sphere and independent of $X$. The random geodesic is defined by $L=\{X+\bR \theta\}$.

This model of random geodesics is the {\it hit and run} model, commonly used in various algorithmic problems such as computing the volume of a convex body. 

We say a convex body $K\subseteq\bR^n$ is in \textit{isotropic position} if a random vector $X$ distributed uniformly inside $K$ has $\bE X=0$ and $\bE X\otimes X= Id$.
We start by two observations about isotropic convex bodies.

\begin{prop}
	Let $K\subseteq\bR^n$ be an isotropic convex body. Let $X$ be a random vector distributed uniformly in $K$. Then for any $a>0$ and any $\varepsilon>0$ we have
	\[\bP\left(|\dprod{X}{\xi}|\geq a+\varepsilon\right) \geq \bP\left(|\dprod{X}{\xi}|\geq a\right) - c\varepsilon,\]
	and
	\[\bP\left(|\dprod{X}{\xi}|\leq a-\varepsilon\right) \geq \bP\left(|\dprod{X}{\xi}|\leq a\right) - c\varepsilon,\]
	where $c>0$ is a universal constant and $\xi\in S^{n-1}$.
\end{prop}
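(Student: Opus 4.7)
The plan is to reduce both inequalities to a single statement: the density $g$ of the real random variable $\dprod{X}{\xi}$ is bounded by a universal constant. Once we have $\norm{g}_\infty\leq c/2$, both inequalities follow at once from
\[\bP(|\dprod{X}{\xi}|\geq a)-\bP(|\dprod{X}{\xi}|\geq a+\eps)=\int_{\set{a\leq |t|\leq a+\eps}} g(t)\,dt\leq 2\eps\norm{g}_\infty\leq c\eps,\]
and analogously for the complementary sub-level set. So the whole task is to produce a uniform bound on $\norm{g}_\infty$.

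First I would observe that $X$ is a log-concave random vector: its density $1_K/\vol{K}$ is log-concave because $K$ is convex. By the Prékopa--Leindler inequality, every marginal of a log-concave distribution is log-concave, hence $g$ is a log-concave probability density on $\bR$. Since $K$ is in isotropic position,
\[\var(\dprod{X}{\xi})=\dprod{(\bE X\otimes X)\xi}{\xi}=|\xi|^2=1.\]

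The next step invokes the classical theorem of Hensley (later refined by Fradelizi) which states that a log-concave probability density $g$ on $\bR$ with variance $\sigma^2$ satisfies $\norm{g}_\infty\leq C/\sigma$ for some universal constant $C$. Applied to our $g$ with $\sigma=1$, this gives $\norm{g}_\infty\leq C$, and taking $c=2C$ finishes the argument.

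The only delicate point is the density bound itself. Assuming Hensley's inequality as a black box, the proof is essentially a one-liner; the main obstacle, if one wanted a self-contained presentation, is justifying that log-concavity plus unit variance forces a uniform sup-norm bound on the density. For symmetric log-concave densities this is a direct computation (compare $g$ with the exponential envelope $g(0)e^{-g(0)|t|/\text{const}}$ and use the variance normalization), but in the general non-symmetric case one needs Fradelizi's more careful analysis.
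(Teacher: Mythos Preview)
Your proposal is correct and follows the same strategy as the paper: reduce to a uniform bound on $\norm{g}_\infty$ for the log-concave, unit-variance density $g$ of $\dprod{X}{\xi}$, then conclude by integrating over the strip of width~$\varepsilon$. The only difference is in how the density bound is obtained: you invoke Hensley's inequality as a black box, whereas the paper gives a short self-contained derivation, combining Fradelizi's bound $\sup f\leq e f(0)$ with the Berwald--Borell lemma (log-concavity of the normalized moments) to show $f(0)\leq\sqrt{2}$, yielding the explicit constant $c=2e\sqrt{2}$.
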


\begin{proof}
	Let $f:\bR\to\bR_+$ be the density of $\dprod{X}{\xi}$, then $f$ is log-concave, $\int tf(t)dt=0$ and $\int t^2f(t)dt=1$.
	By \cite{fradelizi97} we have
	\[\sup f(t) \leq ef(0).\]
	By the Berwald-Borell lemma \cite{berwald47,borell74} the functions $M^+(p)=(\Gamma(p+1))^{-1}\int_0^{\infty}t^pf(t)dt$ and $M^-(p)=(\Gamma(p+1))^{-1}\int_{-\infty}^0t^pf(t)dt$ are log concave in $[-1,\infty)$. Hence,
	\[M^{\pm}(0)\geq \left(M^{\pm}(-1)\right)^{2/3}\left(M^{\pm}(2)\right)^{1/3}.\] 
	Since $f$ is a density function for an isotropic random variable, we have
	\begin{align*}
	&M^+(-1)=M^-(-1)=f(0)\\
	&M^+(0)+M^-(0)=1\\
	&M^+(2)+M^-(2)=1/2.
	\end{align*}
	Adding everything, and using $a^{1/3}+b^{1/3}\geq (a+b)^{1/3}$ for all $a,b\geq 0$, we have
	\[1=M^+(0)+M^-(0)\geq f^{2/3}(0)\left(\left(M^+(2)\right)^{1/3}+\left(M^-(2)\right)^{1/3}\right)\geq f^{2/3}(0)2^{-1/3}.\]
	Hence,
	\[\sup f\leq ef(0)\leq e\sqrt{2}.\]
	To conclude, we have
	\begin{align*}
	\bP\left(|\dprod{X}{\xi}|\geq a+\varepsilon\right) &= \bP\left(|\dprod{X}{\xi}|\geq a\right) - \bP\left(a\leq |\dprod{X}{\xi}|\leq a+\varepsilon\right)\\
	&\geq \bP\left(|\dprod{X}{\xi}|\geq a\right) -2\varepsilon\sup f\geq \bP\left(|\dprod{X}{\xi}|\geq a\right) -c\varepsilon.
	\end{align*}
	The second statement follows similarly.
\end{proof}

\begin{prop}\label{prop:length}
	Let $K\subseteq\bR^n$ be an isotropic convex body. Let $X$ be a random vector distributed uniformly in $K$. Let $\theta \in S^{n-1}$ and let $\ell:K\to \bR_+$ be the length function in direction $\theta$, that is $\ell(x)=\textrm{length}\left(K\cap \{x+\bR\theta\}\right)$. Then
	\[\bP(\ell(X)\geq t)\leq 2e^{-ct},\]
	where $c>0$ is a universal constant.
\end{prop}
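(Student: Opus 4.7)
The plan is to transfer the problem from $K$ to the projection $\theta^\perp$, derive a second-moment bound from the isotropic hypothesis, and then upgrade this polynomial bound to an exponential tail via the log-concavity of the distribution of $\ell(X)$. Let $p : \mathbb{R}^n \to \theta^\perp$ denote the orthogonal projection and, for $y\in p(K)$, set
\[
L(y) = \textrm{length}\left(K \cap (y+\bR\theta)\right),
\]
so that $\ell(x) = L(p(x))$. The first key observation is that $L$ is a concave (hence log-concave) function on the convex body $p(K)$: if two chords project to $y_1,y_2$, then the midpoint projects to $(y_1+y_2)/2$ and contains the midsegment of the two endpoint intervals. Consequently $\ell$ is concave on $K$.

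Next I would obtain the second-moment bound $\bE[\ell(X)^2]\le 12$. Decompose $X=Y+T\theta$ with $Y=p(X)\in\theta^\perp$ and $T=\dprod{X}{\theta}$. Conditionally on $Y=y$, the vector $X$ is uniform on a segment of length $L(y)$, so $\var(T\mid Y)=L(Y)^2/12$. The isotropic hypothesis forces $\var(T)=1$, and the total-variance identity
\[
1 = \var(T) \;=\; \bE\!\left[\tfrac{L(Y)^2}{12}\right] + \var\!\left(\bE[T\mid Y]\right) \;\ge\; \tfrac{1}{12}\,\bE[\ell(X)^2]
\]
gives the bound.

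The heart of the argument is to show that the tail function $\Phi(t)=\bP(\ell(X)\ge t)$ is log-concave in $t$ on $[0,\infty)$. I would apply Prékopa's theorem to the set
\[
C = \set{(x,t)\in\bR^n\times\bR \;:\; x\in K,\; t\le \ell(x)},
\]
which is convex precisely because $K$ is convex and $\ell$ is concave on $K$. Integrating the log-concave indicator $1_C$ out in the $x$ variable yields that $t\mapsto \vol{K}\cdot\Phi(t)$ is log-concave; equivalently, $\Psi(t)=-\log\Phi(t)$ is convex, non-decreasing, and satisfies $\Psi(0)=0$.

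Finally I would convert these ingredients into the exponential bound. Since $\Phi$ is non-increasing and $\int_0^\infty 2t\,\Phi(t)\,dt = \bE[\ell(X)^2]\le 12$, the inequality $\Phi(t)\ge 1/2$ on $[0,T]$ would force $T^2/2\le 12$, so there exists $t_0\le 2\sqrt{6}$ with $\Phi(t_0)\le 1/2$, i.e.\ $\Psi(t_0)\ge \log 2$. Convexity of $\Psi$ with $\Psi(0)=0$ makes $\Psi(t)/t$ non-decreasing, which yields $\Psi(t)\ge (\log 2)\,t/t_0 \ge ct$ for $t\ge t_0$ with $c=\log 2/(2\sqrt{6})$; for $t\le t_0$ the trivial bound $\Phi(t)\le 1\le 2e^{-ct}$ handles the rest. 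The main obstacle is the log-concavity step: establishing that $\Phi$ is log-concave (so that one can bootstrap a second moment into an exponential tail) is what makes the polynomial-to-exponential upgrade possible, and it hinges on the concavity of the chord-length function together with Prékopa's theorem.
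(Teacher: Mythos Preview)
Your argument is correct and takes a genuinely different route from the paper's proof. The paper first applies a Steiner symmetrization $T=S_\theta K$ so that the marginal $\dprod{Y}{\theta}$ becomes symmetric, then invokes the Berwald--Borell inequality to bound \emph{all} moments $(\bE|\dprod{Y}{\theta}|^p)^{1/p}\le Cp$, relates these to $\bE\,\ell(X)^{p+1}$ by an explicit Fubini computation, and finally sums the resulting bound $\bE\,\ell(X)^p\le C_1^p p!$ into an exponential moment to which Markov is applied. Your approach bypasses both the symmetrization and Berwald--Borell: you extract only the second moment via the law of total variance (using that, conditionally on the projection, $\dprod{X}{\theta}$ is uniform on a segment), and then recover the exponential tail in one stroke from the log-concavity of the survival function $\Phi$, which you obtain from Pr\'ekopa's theorem applied to the convex hypograph of the concave chord-length function. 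The paper's method yields the full moment sequence (slightly more information), while your method is more economical in its inputs, gives an explicit constant $c=\log 2/(2\sqrt{6})$, and makes transparent that the exponential decay is a direct consequence of concavity of $\ell$ rather than of deeper moment comparison inequalities. Both proofs ultimately rest on the same log-concave structure of $K$, just exploited at different stages.
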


\begin{proof}
	The function $\ell(x)$ depends only on the projection of $x$ to the hyperplane $\theta^{\perp}$. Hence, by Fubini's theorem we have,
	\[\bE \ell^p(X)=\frac{1}{\vol{K}}\int_{\proj_{\theta^{\perp}}K}\ell^p(x)\ell(x)dx.\]
	Let $S_{\theta}$ be the Steiner symmetrization in the $\theta$ direction and set $T=S_{\theta}K$ (for more details on the Steiner symmetrization see \cite{bonnesen+fenchel87}). Since the Steiner symmetrization is in the $\theta$ direction the function $\ell$ and $\proj_{\theta^{\perp}}K$ are preserved under it. Hence, for a random vectors $X$ and $Y$ distributed uniformly on $K$ and $T$ we have,
	\[\bE\ell^p(Y)=\bE\ell^p(X).\]
	In addition
	\[\var \dprod{Y}{\theta}\leq \var \dprod{X}{\theta}=1.\]
	Hence, by the Berwald-Borell lemma \cite{berwald47,borell74} we have
	\[\left(\bE |\dprod{Y}{\theta}|^p\right)^{1/p}\leq Cp \left(\bE |\dprod{Y}{\theta}|^2\right)^{1/2} \leq Cp.\]
	Since $\dprod{Y}{\theta}$ is a symmetric random variable, we have
	\begin{align*}
	\bE  |\dprod{Y}{\theta}|^p &= \frac{1}{\vol{T}}\int_{\proj_{\theta^{\perp}}T}\left(\int_{-\ell(y)/2}^{\ell(y)/2}|t|^pdt\right)\ell(y)dy=\frac{1}{\vol{K}}\int_{\proj_{\theta^{\perp}}K}\left(\int_{-\ell(x)/2}^{\ell(x)/2}|t|^pdt\right)\ell(x)dx\\
	&=\frac{1}{2^p(p+1)\vol{K}}\int_{\proj_{\theta^{\perp}}K}\ell^{p+1}(x)\ell(x)dx
	=\frac{1}{2^p(p+1)}\bE\ell^{p+1}(X)
	\end{align*}
	We have,
	\[\bE\ell^p(X)=2^{p-1}p\bE|\dprod{Y}{\theta}|^{p-1}\leq (2C)^{p-1}p^p\leq C_1^pp!.\]
	By Markov's inequality, for any $\alpha>0$ we have
	\[\bP(\ell(X)\geq t)=\bP(e^{\alpha\ell(X)}\geq e^{\alpha t})\leq e^{-\alpha t}\bE e^{\alpha\ell(X)}= e^{-\alpha t} \sum_{p=0}^{\infty} \frac{\alpha^p}{p!}\bE\ell^p(X)\leq e^{-\alpha t}\sum_{p=0}^{\infty}(C_1\alpha)^p.\]
	Choosing $\alpha=1/(2C_1)$ concludes the proof.	
\end{proof}

Another observation we use is the concentration of measure on the sphere. The following proposition can be proved by a direct computation or by the concentration of Lipschitz functions on the sphere.

\begin{prop}
	Let $\theta\in S^{n-1}$ be a random vector chosen by the uniform distribution, and let $\xi\in S^{n-1}$ be a fixed direction. Then
	\[\bP\left(|\dprod{\theta}{\xi}|\geq t\right)\leq Ce^{-nt^2/2},\quad\forall t>0.\] 
\end{prop}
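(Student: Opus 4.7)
The plan is to reduce to a one-dimensional integral and dominate by a Gaussian. By the rotational invariance of $\sigma_{n-1}$, I may assume without loss of generality that $\xi = e_1$, so that $\dprod{\theta}{\xi} = \theta_1$. Applying Proposition \ref{prop:fubini} to the indicator function of the cap $\{|x_1|\geq t\}$, the integral on the inner sphere $S^{n-2}$ contributes a factor of $1$ and I am left with
\[
\bP\left(|\dprod{\theta}{\xi}|\geq t\right) \;=\; 2\tau_n \int_t^1 (1-s^2)^{(n-3)/2}\,ds,
\]
where $\tau_n = \sqrt{n/(2\pi)}+O(1/\sqrt{n})$ by the same proposition.

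The next step is to use the elementary inequality $1-s^2\leq e^{-s^2}$ on $[0,1]$ to deduce $(1-s^2)^{(n-3)/2}\leq e^{-(n-3)s^2/2}$, and then to apply the standard Gaussian tail bound $\int_t^\infty e^{-as^2/2}\,ds \leq (at)^{-1}e^{-at^2/2}$ with $a=n-3$. This yields
\[
\bP\left(|\dprod{\theta}{\xi}|\geq t\right) \;\leq\; \frac{2\tau_n}{(n-3)t}\,e^{-(n-3)t^2/2}.
\]
Since $\tau_n/(n-3) = O(1/\sqrt{n})$ and $|\dprod{\theta}{\xi}|\leq 1$ forces me to consider only $t\leq 1$, I can write $e^{-(n-3)t^2/2}=e^{-nt^2/2}\cdot e^{3t^2/2}\leq e^{3/2}e^{-nt^2/2}$, which brings the bound to the desired form $Ce^{-nt^2/2}$ up to a universal constant.

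The only subtle point is the regime of very small $t$, where the factor $1/t$ in the Gaussian tail estimate is wasteful and where the target bound $Ce^{-nt^2/2}$ is of order one. This is resolved trivially: any probability is at most $1$, so by enlarging the universal constant $C$ I can assume $Ce^{-nt^2/2}\geq 1$ whenever $nt^2$ is bounded. As the remark in the statement suggests, a fully case-free alternative is to invoke L\'evy's concentration inequality on the sphere applied to the $1$-Lipschitz function $\theta\mapsto \dprod{\theta}{\xi}$, whose median is $0$ by the symmetry $\theta\mapsto -\theta$; this yields the claim directly. I expect the only real work to be the bookkeeping of constants needed to replace $n-3$ by $n$ in the exponent.
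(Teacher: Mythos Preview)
The paper does not actually prove this proposition; it only remarks that it ``can be proved by a direct computation or by the concentration of Lipschitz functions on the sphere.'' Your proposal carries out precisely the first of these two suggested routes---the direct computation via Proposition~\ref{prop:fubini} followed by the elementary bound $1-s^2\le e^{-s^2}$ and a Gaussian tail estimate---and then notes the second route (L\'evy's inequality for the $1$-Lipschitz map $\theta\mapsto\dprod{\theta}{\xi}$) as an alternative. Both match the paper's intent exactly, and your argument is correct; the only residual bookkeeping is that the main estimate needs $n\ge 4$ (so that $n-3>0$), but the finitely many cases $n\le 3$ are absorbed into the universal constant by the same trivial $\bP\le 1$ observation you already use for small $t$.
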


We are now ready to show that random geodesic sampling in any isotropic convex body can be close to a zero one law. Let $K\subseteq\bR^n$ be a convex body. For any $\xi\in S^{n-1}$ define the set $A_{\xi}=\{x\in K;\;|\dprod{x}{\xi}|\geq t_{\xi}\}$, where $t_{\xi}>0$ is chosen such that $\vol{A_{\xi}}/\vol{K}=1/2$.

\begin{prop}
	Let $K\subseteq\bR^n$ be an isotropic convex body. Let $X$ be a random vector distributed uniformly in $K$. Let $L=\{X+\bR \theta\}$ where $X$ is uniform in $K$ and $\theta$ is uniform in $S^{n-1}$. For any $\xi\in S^{n-1}$ we have,
	\[\bP\left(\frac{\textrm{length}(L\cap A_{\xi})}{\textrm{length}(L\cap K)}\in\{0,1\}\right)=1-O^*\left(\frac{1}{\sqrt{n}}\right).\]
\end{prop}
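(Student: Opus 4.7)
The plan is to reduce the event $\left\{\frac{\mathrm{length}(L\cap A_\xi)}{\mathrm{length}(L\cap K)}\in(0,1)\right\}$ to a small event concerning the one-dimensional marginal $\langle\cdot,\xi\rangle$. Parametrizing the chord $L\cap K$ by arc length $s$, the map $s\mapsto\langle X+s\theta,\xi\rangle$ is affine with slope $\langle\theta,\xi\rangle$, so its image is an interval of length exactly $\ell(X)\,|\langle\theta,\xi\rangle|$ (where $\ell(X):=\mathrm{length}(L\cap K)$) containing $\langle X,\xi\rangle$. The chord meets both $A_\xi$ and its complement in $K$ if and only if this interval contains $+t_\xi$ or $-t_\xi$; hence a necessary condition for the ratio to lie in $(0,1)$ is
\[
\langle X,\xi\rangle\in\bigl[-t_\xi-\delta,\,-t_\xi+\delta\bigr]\cup\bigl[t_\xi-\delta,\,t_\xi+\delta\bigr],\qquad \delta:=\ell(X)\,|\langle\theta,\xi\rangle|.
\]

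Next I would assemble the three concentration estimates proved in this section to control $\delta$ and the marginal probability above. By the spherical concentration proposition, $|\langle\theta,\xi\rangle|\le C\sqrt{\log n/n}$ with probability $1-O(n^{-C'})$; by Proposition \ref{prop:length}, $\ell(X)\le C\log n$ with probability $1-O(n^{-C'})$; and $C$ may be taken large enough that $C'$ exceeds any prescribed constant. Let $G$ denote the intersection of these two high-probability events, so that on $G$ one has deterministically $\delta\le C^2\log^{3/2}(n)/\sqrt{n}$. Finally, the first proposition of the section shows that the density $f$ of $\langle X,\xi\rangle$ satisfies $\sup f\le e\sqrt{2}$, so the $\langle X,\xi\rangle$-mass of any interval of length $\varepsilon$ is at most a universal constant times $\varepsilon$.

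A union bound then gives
\[
\mathbb{P}\!\left(\frac{\mathrm{length}(L\cap A_\xi)}{\mathrm{length}(L\cap K)}\in(0,1)\right)\le\mathbb{P}(G^c)+4e\sqrt{2}\,C^2\,\frac{\log^{3/2}(n)}{\sqrt{n}}=O^*\!\left(\frac{1}{\sqrt{n}}\right),
\]
since $\mathbb{P}(G^c)$ is polynomially small in $n$. The main point requiring care, rather than a genuine obstacle, is the dependence among $\ell(X)$, $\langle X,\xi\rangle$ and $\langle\theta,\xi\rangle$: $\ell(X)$ is a function of $\theta$ and of the projection of $X$ to $\theta^\perp$, which in general is correlated with $\langle X,\xi\rangle$. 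This is handled cleanly by using only the unconditional tail bounds for $\ell(X)$ and $\langle\theta,\xi\rangle$ in defining $G$, and then applying the marginal density bound to $\langle X,\xi\rangle$; independence is never invoked.
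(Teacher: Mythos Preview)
Your proposal is correct and follows essentially the same approach as the paper's proof. Both arguments rest on the same three ingredients: the spherical concentration bound for $|\langle\theta,\xi\rangle|$, the exponential tail bound for the chord length $\ell(X)$ from Proposition~\ref{prop:length}, and the uniform bound on the density of $\langle X,\xi\rangle$. The only difference is organizational: the paper argues the ``positive'' direction by defining the events
\[
D=\Bigl\{|\langle X,\xi\rangle|\ge t_\xi+c^{-1}\tfrac{\log^3 n}{\sqrt{n}}\Bigr\},\quad
P=\Bigl\{|\langle\theta,\xi\rangle|\le \tfrac{\log n}{\sqrt{n}}\Bigr\},\quad
M=\Bigl\{\ell(X)\le c^{-1}\log n\Bigr\},
\]
shows that on $D\cap P\cap M$ the entire chord lies in $A_\xi$, and then repeats with $D'$ to handle the case ratio $=0$. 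You instead bound the complementary event directly, observing that ratio $\in(0,1)$ forces $\langle X,\xi\rangle$ to lie within $\ell(X)\,|\langle\theta,\xi\rangle|$ of $\pm t_\xi$, which on your event $G$ is at most $O\bigl(\log^{3/2}n/\sqrt{n}\bigr)$. Your packaging is slightly more economical (one union bound instead of two parallel cases), and your remark that independence is never invoked is exactly right and mirrors the paper's implicit use of unconditional tail bounds.
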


\begin{proof}
	We define the following events:
	\begin{align*}
	D &= \left\{|\dprod{X}{\xi}|\geq t_{\xi} + c^{-1}\frac{\log^3n}{\sqrt{n}}\right\},\\
	P &= \left\{|\dprod{\theta}{\xi}| \leq \frac{\log n}{\sqrt{n}} \right\},\\
	M &= \left\{\textrm{length}(L\cap K) \leq c^{-1}\log n\right\},
	\end{align*}
	where $c>0$ is the constant from Proposition \ref{prop:length}.
	Assuming the event $D\cap P\cap M$, we have $X$ inside $A_{\xi}$ with distant to the inner boundary of $A_{\xi}$ of at least $c^{-1}\log^3n/\sqrt{n}$. In addition, the line passing through $X$ can diverge in the $\xi$ direction by at most $c^{-1}\log n\cdot \log n/\sqrt{n}$, hence it cannot escape it. Therefore,
	\[\bP\left(\frac{\textrm{length}(L\cap A_{\xi})}{\textrm{length}(L\cap K)}=1\right)\geq \bP(D\cap P\cap M).\]
	By previous propositions
	\[\bP(P)\geq 1-\frac{C}{n},\]
	\[\bP(M)\geq 1 - \frac{2}{n},\]
	and
	\[\bP(D)= \frac{1}{2} - C'\frac{\log^3n}{\sqrt{n}}.\]
	Hence,
	\[\bP\left(\frac{\textrm{length}(L\cap A_{\xi})}{\textrm{length}(L\cap K)}=1\right)\geq \frac{1}{2} - O^*\left(\frac{1}{\sqrt{n}}\right).\]
	We can repeat the argument by replacing $D$ with
	\[D'=\left\{|\dprod{X}{\xi}|\leq t_{\xi} - c^{-1}\frac{\log^3n}{\sqrt{n}}\right\},\]
	and obtain
	\[\bP\left(\frac{\textrm{length}(L\cap A_{\xi})}{\textrm{length}(L\cap K)}=0\right)\geq \frac{1}{2} - O^*\left(\frac{1}{\sqrt{n}}\right).\]
\end{proof}

\begin{remark}
	There are other examples of subsets with a similar property. For example, similar analysis shows that in an isotropic convex body $K$ with constant thin shell width (e.g the cube or the simplex) the set $A=K\cap(RB_2^n)$, where $B_2^n$ is the euclidean ball and $R$ is chosen such that $\vol{A}/\vol{K}=1/2$, has a similar property (when replacing $0,1$ with $o(1),1-o(1)$).
\end{remark}

In order to complete the proof of Theorem \ref{thm:convex_geodesics}, we note that we can repeat this argument for any convex body, but in a specific direction $\xi$. Let $K\subseteq \bR^n$ be a convex body. Let $X$ be a random vector distributed uniformly in $K$. Denote the covariance matrix of $X$ by $C=\bE X\otimes X$. Choosing $\xi = \textrm{argmax}\{\dprod{Cx}{x};\;x\in S^{n-1}\}$, will obtain the desired result.

\begin{figure}
	\begin{subfigure}{.5\textwidth}
		\centering
		\includegraphics[width=.95\linewidth]{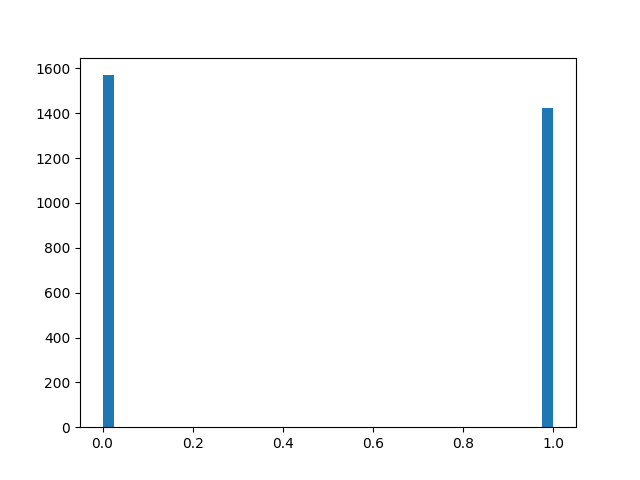}
		\caption{Standard simplex.}
		\label{fig:simplex_samples}
	\end{subfigure}%
	\begin{subfigure}{.5\textwidth}
		\centering
		\includegraphics[width=.95\linewidth]{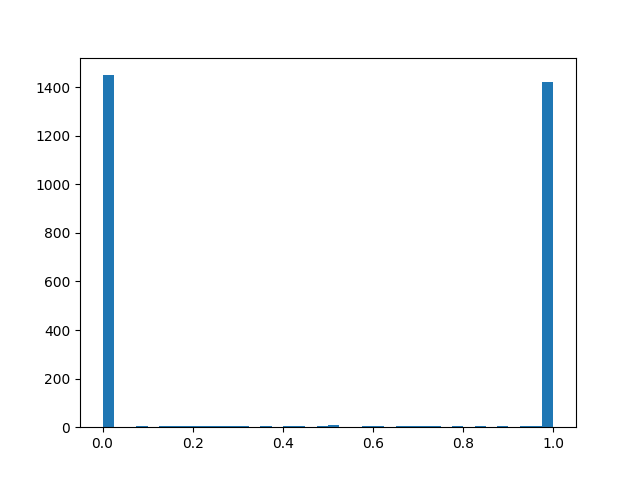}
		\caption{Ball}
		\label{fig:ball_samples}
	\end{subfigure}
	\caption{Histogram of simulations on the sphere in dimension $1000$ with $3000$ samples for $A_{\xi}$ defined on different convex bodies.}
	\label{fig:convex_simulation}
\end{figure}

While Theorem \ref{thm:convex_geodesics} shows that for some subsets of the simplex, intersection with random geodesic will have a zero one law, Theorem \ref{thm:sphere_geodesics} shows that for the simplex there is another curve model that samples subsets of the simplex in a more representative way.

Let $\Delta_n=\{x\in\bR^n;\;x_i\geq 0,\; \sum x_i \leq 1\}$ be the Archimedes simplex in $\bR^n$. It is well known that the transformation $\pi_n:S^{2n+1}\to \Delta_n$ defined by
\[\pi_n(x)=(x_1^2+x_{n+2}^2,\ldots,x_{n}^2+x_{2n+1}^2)\]
pushes the surface area measure of the sphere to the volume measure of the simplex.
Let $x,y\in S^{2n+1}$ be orthogonal to each other, and let $\gamma(t)=x\cos t+y\sin t$, be the geodesic curve that starts at $x$ in the direction $y$, and $\delta=\pi_n\circ\gamma$. We have

\begin{align*}
	\left(\delta(t)\right)_j&=\left((\pi_n\gamma)(t)\right)_j=(c_j\cos t+y_j\sin t)^2+(c_{n+j+1}\cos t+y_{n+j+1}\sin t)^2\\
	&=\frac{1}{2}(x_j^2+x_{n+j+1}^2+y_j^2+y_{n+j+1}^2) + \frac{1}{2}(x_j^2+x_{n+j+1}^2-y_j^2-y_{n+j+1}^2)\cos(2t)\\
	& + (x_jy_j+x_{n+j+1}y_{n+j+1})\sin(2t).
\end{align*}

Hence, the map $\pi_n$ maps geodesics on a sphere to a ellipses in $\Delta_n$.
Hence, we can push forward our result to the simplex and get,

\begin{cor}\label{cor:simplex}
	Let $A\subseteq\Delta_n$ be with $\vol{A}/\vol{\Delta_n}=1/2$. Let $L$ be a random ellipse in $\Delta_n$ chosen as the $\pi_n$ image of a uniform geodesic on $S^{2n-1}$. We have,
	\[\bP\left(\left|\frac{2\mu(A\cap L)}{\mu(L)}-1\right|\geq \frac{1}{2^{1/3}}\right)\leq \frac{1}{2^{1/3}},\]
	where $\mu$ is the push forward on the length by the map $\pi_n$.
\end{cor}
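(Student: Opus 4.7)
The plan is to transport Theorem \ref{thm:sphere_geodesics} through the map $\pi_n$. Set $B=\pi_n^{-1}(A)\subseteq S^{2n+1}$. Since $\pi_n$ pushes the uniform (surface area) probability measure on $S^{2n+1}$ to the uniform (volume) probability measure on $\Delta_n$, the hypothesis $\vol{A}/\vol{\Delta_n}=1/2$ translates into $\sigma_{2n+1}(B)=1/2$, so $B$ satisfies the hypothesis of Theorem \ref{thm:sphere_geodesics}.

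Next I would check the two compatibility statements that make the pushforward argument work. First, the random ellipse $L$ in the corollary is by construction $\pi_n\circ \gamma$ for a uniformly chosen geodesic $\gamma$ on $S^{2n+1}$; this requires no further argument since it is the definition. Second, since $\mu$ on $L$ is defined as the $\pi_n$-pushforward of length on $\gamma$, the normalised measures satisfy
\[
\sigma_{\gamma}(B\cap\gamma)=\frac{\mu(A\cap L)}{\mu(L)},
\]
because $\pi_n^{-1}(A\cap L)\cap\gamma=B\cap\gamma$ (using that $\gamma$ consists of points $x\in S^{2n+1}$ and that $\pi_n(x)\in A$ iff $x\in B$). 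Dividing by $\sigma_{2n+1}(B)=1/2$ yields
\[
\frac{\sigma_{\gamma}(B\cap\gamma)}{\sigma_{2n+1}(B)}=\frac{2\mu(A\cap L)}{\mu(L)}.
\]

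Finally, I would apply Theorem \ref{thm:sphere_geodesics} directly to $B\subseteq S^{2n+1}$ and the random geodesic $\gamma$: the probability that the left hand side above deviates from $1$ by at least $2^{-1/3}$ is at most $2^{-1/3}$, which is exactly the stated bound.

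The only potentially delicate point is measurability: one needs $B=\pi_n^{-1}(A)$ to be a measurable subset of $S^{2n+1}$ to invoke Theorem \ref{thm:sphere_geodesics}, which is immediate from continuity of $\pi_n$, and one needs the pointwise identification of $\pi_n^{-1}(A\cap L)\cap\gamma$ with $B\cap\gamma$ up to a $\sigma_{\gamma}$-null set, which follows from the fact that $\pi_n|_{\gamma}:\gamma\to L$ is surjective onto $L$ and the pushforward of the normalised arc-length on $\gamma$ is by definition $\mu/\mu(L)$. Beyond this bookkeeping, the corollary is just Theorem \ref{thm:sphere_geodesics} rewritten on the simplex side.
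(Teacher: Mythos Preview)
Your proposal is correct and is exactly the argument the paper has in mind: the paper does not give a separate proof of the corollary but simply says ``we can push forward our result to the simplex and get'' the statement, after having established that $\pi_n$ pushes the uniform measure on the sphere to the uniform measure on $\Delta_n$ and maps geodesics to ellipses. You have spelled out precisely this pushforward bookkeeping --- pulling $A$ back to $B=\pi_n^{-1}(A)$, matching the measures, and invoking Theorem~\ref{thm:sphere_geodesics} --- which is all that is needed. (The discrepancy between $S^{2n-1}$ in the corollary and $S^{2n+1}$ in the definition of $\pi_n$ is an indexing inconsistency in the paper itself, not a flaw in your argument.)
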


\begin{question}
	Let $K\subseteq\bR^n$ be a convex body, can we find a distribution of curves inside $K$ that would generalize Corollary \ref{cor:simplex} to $K$?
\end{question}

\section{Arithmetic Progressions in Finite Fields}\label{sec:finite_fields}
In this section we demonstrate how a similar technique to the one we used to prove Theorem~\ref{thm:sphere_geodesics}, can be applied in other settings. We analyze the intersection of subsets of the $n$~dimensional discrete torus with random arithmetic progressions.

Let $p>0$ be a prime number and let $n>0$ be an integer. Denote $\bZ_p=\bZ/p\bZ$, and 
\[G_p=\{(a, a+b,\ldots,a+(p-1)b);\; a,b\in\bZ_p^n,\;b\ne 0\}.\] 
For convenience we shall denote a sequence $(a, a+b,\ldots,a+(p-1)b)$ by the pair $(a;b)$.
We define a discrete version of the Radon transform $R:L^2(\bZ_p^n)\to L^2(G_p)$ by
\[(Rf)(a;b)=\frac{1}{p}\sum_{j=0}^{p-1}f(a+jb).\]
The conjugate transform $R^*:L^2(G_p)\to L^2(\bZ_p^n)$ is defined by
\[(R^*g)(a)=\frac{1}{p^n-1}\sum_{b\ne 0}g(a;b).\]
As before, we define $S=R^*R$.

\begin{prop}
	The eigenvalues of the operator $S$ are
	\[\lambda^2 = \frac{p^{n-1}-1}{p^n-1},\]
	for functions with mean zero, and
	\[\lambda^2=1,\]
	for constant functions.

\end{prop}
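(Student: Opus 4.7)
The plan is to compute $S=R^{*}R$ explicitly on an arbitrary $f\in L^2(\bZ_p^n)$ and show that $S$ is a linear combination of the identity and the rank-one projection onto constants. The eigenvalues and eigenspaces then follow immediately.

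First I would unfold the definitions and swap the order of summation:
\[
(Sf)(a)=\frac{1}{p(p^n-1)}\sum_{b\ne 0}\sum_{j=0}^{p-1}f(a+jb).
\]
The main step is to split this double sum according to whether $j=0$ or $j\ne 0$. The $j=0$ term contributes $(p^n-1)f(a)$ to the inner sum, independent of $b$. For each fixed $j\in\{1,\dots,p-1\}$, the map $b\mapsto jb$ is a bijection of $\bZ_p^n\setminus\{0\}$ (since $p$ is prime and $j$ is invertible mod $p$), so substituting $c=jb$ gives
\[
\sum_{b\ne 0}f(a+jb)=\sum_{c\ne 0}f(a+c)=\Sigma_f-f(a),
\]
where $\Sigma_f:=\sum_{x\in\bZ_p^n}f(x)$. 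Summing over the $(p-1)$ nonzero values of $j$ yields $(p-1)(\Sigma_f-f(a))$.

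Combining the two contributions gives
\[
(Sf)(a)=\frac{(p^n-1)f(a)+(p-1)(\Sigma_f-f(a))}{p(p^n-1)}=\frac{p^{n-1}-1}{p^n-1}\,f(a)+\frac{(p-1)p^{n-1}}{p^n-1}\,\bar f,
\]
where $\bar f=\Sigma_f/p^n$ is the mean of $f$. From this closed form the conclusion is immediate: if $f$ is constant then $\bar f=f$ and the two coefficients sum to $1$, giving eigenvalue $1$; if $f$ has mean zero then the second term vanishes and $f$ is an eigenfunction with eigenvalue $(p^{n-1}-1)/(p^n-1)$. Since the constants and the mean-zero functions together span $L^2(\bZ_p^n)$, these are all the eigenvalues.

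No step here looks like a serious obstacle; the only mild subtlety is verifying that the substitution $c=jb$ really is a bijection of $\bZ_p^n\setminus\{0\}$ for each nonzero $j$, which uses primality of $p$ in an essential way. As a sanity check I would verify that substituting $f\equiv 1$ into the closed form gives $(Sf)(a)=1$, confirming the coefficients add up correctly.
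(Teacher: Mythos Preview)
Your proof is correct. The computation is clean, and the only place where primality of $p$ enters---that $b\mapsto jb$ is a bijection of $\bZ_p^n\setminus\{0\}$ for each $j\in\{1,\dots,p-1\}$---is identified correctly.

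Your route, however, is genuinely different from the paper's. The paper argues that $S$ commutes with translations on $\bZ_p^n$, so its eigenfunctions are the characters $f_y(x)=e^{2\pi i\langle x,y\rangle/p}$; it then evaluates $Sf_y$ using the orthogonality relation $\sum_{j=0}^{p-1}e^{2\pi i j\langle b,y\rangle/p}=p\cdot 1_{\{\langle b,y\rangle=0\}}$, and the eigenvalue falls out as $\#\{b\ne 0:\langle b,y\rangle=0\}/(p^n-1)$. This mirrors the spherical-harmonics argument used for the sphere earlier in the paper. By contrast, you bypass Fourier analysis entirely: the substitution $c=jb$ lets you see directly that $S$ is a scalar multiple of the identity plus a rank-one projection onto constants, so the spectrum is immediate. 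Your argument is more elementary and self-contained; the paper's fits the template used throughout and would generalize more readily to situations where the mean-zero space is not a single eigenspace.
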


\begin{proof}
	The operator $S$ commutes with translations. Hence, the eigenfunctions of $S$ are of the form $f_y(x)=e^{i2\pi\dprod{x}{y}/p}$ for $y\in \bZ_p^n$.
	We have,
	\[Sf_y(a)=\frac{1}{p(p^{n}-1)}\sum_{b\ne 0}\sum_{j=0}^{p-1} f_y(a+jb).\]
	By the definition of $f_y$ we have
	\[\sum_{j=0}^{p-1}f_y(a+jb)=\sum_{j=0}^{p-1}e^{i2\pi \dprod{a+jb}{y}}=e^{i2\pi \dprod{a}{y}/p}\sum_{j=0}^{p-1}e^{i2\pi j \dprod{b}{y}/p}=f_y(a)\begin{cases}
	p, & \dprod{b}{y}=0\\
	0, & \dprod{b}{y}\ne 0
	\end{cases}.\]
	Hence,
	\[S(f_y)(a)=f_y(a)\frac{1}{p^{n}-1}\#\{b;\;\dprod{b}{y}=0, b\ne 0\}=f_y(a)\frac{1}{p^{n}-1}\begin{cases}
	p^n-1,\; y=0\\
	p^{n-1}-1,\; y\ne 0
	\end{cases}.\]
	We have,
	\[\lambda_y^2 = \frac{p^{n-1}-1}{p^n-1},\quad\forall y\ne 0.\]
\end{proof}

Using the singular values of the Radon transform, we can bound the variance as before, and get a result analogous to Theorem \ref{thm:sphere_geodesics}.

\begin{theorem}
	Let $A\subseteq \bZ^n$ with $m(A)=1/2$ and let $a,b\in \bZ_p^n$ be random vectors such that $b\ne0$. Let $L=\{a, a+b,\ldots,a+(p-1)b\}$ be the arithmetic progression defined by $a$ and $b$. Then,
	\[\bP\left(\left|\#(A\cap L)-\frac{p}{2}\right|\geq \sqrt{\frac{p}{2}}\right)\leq \frac{1}{2}.\]
\end{theorem}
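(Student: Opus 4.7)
The plan is to mirror the variance argument of Theorem~\ref{thm:sphere_geodesics}, with the eigenvalue computation just proved playing the role of Corollary~\ref{cor:var}. First I would set
\[f = \frac{1_A}{m(A)} - 1 = 2\cdot 1_A - 1,\]
which has mean zero and satisfies $\norm{f}_{L^2(\bZ_p^n)}^2 = 1$ since $|f|\equiv 1$ on $\bZ_p^n$. Unwinding the definition of $R$, I compute
\[(Rf)(a;b) = \frac{1}{p}\sum_{j=0}^{p-1} f(a+jb) = \frac{2\,\#(A\cap L) - p}{p},\]
so that $|Rf(a;b)|^2 = \tfrac{4}{p^2}\bigl(\#(A\cap L)-p/2\bigr)^2$.

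Since $f$ is orthogonal to the constants, the preceding proposition gives the eigenvalue bound
\[\norm{Rf}_{L^2(G_p)}^2 = \dprod{Sf}{f} \leq \frac{p^{n-1}-1}{p^n-1}\norm{f}^2 \leq \frac{1}{p},\]
where I use $p(p^{n-1}-1) = p^n - p \leq p^n - 1$ to simplify the eigenvalue. On the other hand, $\bE[Rf]=\bE f = 0$, so the left-hand side is precisely $\frac{4}{p^2}\var\bigl(\#(A\cap L)\bigr)$, and combining the two displays yields $\var\bigl(\#(A\cap L)\bigr) \leq p/4$. Finally, applying Chebyshev's inequality with $t = \sqrt{p/2}$ produces
\[\bP\!\left(\left|\#(A\cap L) - \tfrac{p}{2}\right|\geq \sqrt{\tfrac{p}{2}}\right) \leq \frac{\var(\#(A\cap L))}{p/2} \leq \frac{1}{2}.\]

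The substantive work has already been absorbed into the calculation of the eigenvalues of $S$, so I do not foresee any real obstacle here. The only items requiring care are the bookkeeping of the discrete $L^2$ normalizations on $\bZ_p^n$ and $G_p$ (so that $R^*R$ translates correctly into the variance with respect to the uniform distribution on $\{(a;b) : b\ne 0\}$), and the elementary estimate turning the exact eigenvalue $(p^{n-1}-1)/(p^n-1)$ into the clean bound $1/p$.
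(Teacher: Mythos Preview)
Your proposal is correct and follows exactly the approach the paper intends: the paper itself does not spell out the proof, merely writing that ``using the singular values of the Radon transform, we can bound the variance as before,'' and your argument carries this out in full. The one point you rightly flag---that $\langle Sf,f\rangle$ coincides with $\|Rf\|^2$ under the uniform probability measures on $\bZ_p^n$ and on pairs $(a;b)$ with $b\neq 0$---works because $(Rf)(a;b)$ depends only on the underlying progression as a set, so the apparent mismatch between the definition of $R^*$ and the true adjoint disappears on the image of $R$.
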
  

\section{Intersection With Higher Dimensional Subspaces}\label{sec:general_k}
In \S~\ref{sec:radon} we saw how to express the singular values of the $k$~dimensional Radon transform $R_k$ by one dimensional integration of the Gegenbauer polynomials with respect to some weight function. We used this result in \S~\ref{sec:sphere_geodesics} in order to find the behavior of intersection with random geodesics on the sphere through the special case $k=2$. The purpose of this section is to understand random intersection on the sphere by subspaces of higher dimension $k$ by studying the singular values of $R_k$ for $k>2$.

It is possible to generalize the approach of Section \ref{sec:sphere_geodesics} and use the known coefficients of the trigonometric polynomials $P_{\ell}(\cos t)$ (see \cite{szego75}) and the Fourier series of $\sin^{k-2}(t)$. By orthogonality of the Fourier basis, this will be a finite sum, and the number of summands in $\lambda_{k,2\ell}$ is $\min\{\ell,k/2-1\}$. If either $k$ or $\ell$ are small, the calculations are simple, but when  both $k$ and $\ell$ are large, it becomes difficult to handle.

We begin with an example of results of this method. Here, we fix a small $k$ and calculate $\lambda_{k,2\ell}^2$ for all $\ell$.

\begin{example}
	The eigenvalues of $S_4$ are 
	\[\lambda_{4,\ell}^2=\binom{\ell+n-3}{\ell}^{-1}\binom{\ell/2+n/2-2}{\ell/2}^2\frac{2n-8}{(\ell+n-4)(\ell+2)}.\]
	We obtain $|\lambda_{4,2}|\leq 1/2$. For large $\ell$ we have
	\[\lambda_{4,\ell}^2\approx C\left(\frac{n}{\ell(n+\ell)}\right)^{3/2}.\]
	In addition, using Proposition \ref{prop:decreasing} the sequence $\{\lambda_{4,2\ell}^2\}$ is a product of two positive decreasing sequences, hence it is also decreasing.
	We obtain, that for any  $f\in L^2\left(S^{n-1}\right)$ such that $\int_{S^{n-1}}f(x)d\sigma_{n-1}=0$,
	\[\norm{R_4f}_{L^2(G_{n,2})}\leq \frac{1}{2}\norm{f}_{L^2(S^{n-1})}.\]
\end{example}

Similarly we can calculate the eigenvalue $\lambda_{k,2}^2$ for all even $k\geq 2$.

\begin{example}
	For any even $k\geq 4$ we have,
	\[\lambda_{k,2}^2=\tau_k\frac{\pi(n-2)}{2^{k-1}}\binom{k-2}{k/2-1}\frac{n-k}{k}\binom{n-1}{2}^{-1}=\frac{n-k}{k(n-1)}.\]
\end{example}

We employ a different technique in order to calculate the eigenvalues of $S_k$ for all $k$ and $\ell$.

\begin{prop}\label{prop:general_singular_values}
	Let $2\leq k\leq n$ and let $\ell\geq 1$. Then,
	\[\lambda_{k,2\ell}^2=\tau_k \binom{2\ell+n-3}{2\ell}^{-1}\frac{\sqrt{\pi}\Gamma(2\ell+n/2-1)\Gamma(k/2-1/2)}{\Gamma(\ell+1)\Gamma(k/2+\ell)\Gamma(n/2-1)}\frac{\Gamma(n/2+l-1)\Gamma(n/2+l-k/2+1/2)}{\Gamma(n/2-k/2)\Gamma(n/2+2l-1)}.\]
\end{prop}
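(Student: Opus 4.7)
My plan is to evaluate the one-dimensional integral
\[I_{k,\ell}=\int_{-1}^{1}P_{2\ell}(t)(1-t^2)^{(k-3)/2}\,dt\]
appearing in Proposition~\ref{prop:eigen}. Once $I_{k,\ell}$ is in closed form, the claimed formula for $\lambda^2_{k,2\ell}$ follows at once by multiplying by $\tau_k \binom{2\ell+n-3}{2\ell}^{-1}$. The strategy is to reduce $I_{k,\ell}$ to a terminating ${}_3F_2$ at unit argument whose parameters satisfy Watson's balance, so that a classical summation theorem applies.

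First I would substitute the standard hypergeometric form of the Gegenbauer polynomial,
\[P_{2\ell}(x)=\frac{(n-2)_{2\ell}}{(2\ell)!}\,{}_2F_1\!\left(-2\ell,\,2\ell+n-2;\,\tfrac{n-1}{2};\,\tfrac{1-x}{2}\right),\]
and change variable to $u=(1-x)/2$, so that $(1-x^2)^{(k-3)/2}=2^{k-3}[u(1-u)]^{(k-3)/2}$. Euler's integral representation of ${}_3F_2$ then yields
\[I_{k,\ell}=\frac{(n-2)_{2\ell}}{(2\ell)!}\cdot 2^{k-2}\cdot\frac{\Gamma((k-1)/2)^{2}}{\Gamma(k-1)}\cdot{}_3F_2\!\left(-2\ell,\,2\ell+n-2,\,\tfrac{k-1}{2};\,\tfrac{n-1}{2},\,k-1;\,1\right).\]
The parameters are in Watson's pattern: with $a=-2\ell$, $b=2\ell+n-2$, $c=(k-1)/2$, the two lower parameters are exactly $(a+b+1)/2$ and $2c$. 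Watson's summation theorem therefore expresses the ${}_3F_2$ as an explicit ratio of eight $\Gamma$ values. From there the simplification uses Legendre duplication on $\Gamma(k-1)$ to kill the $2^{k-2}$, and on $\Gamma(2\ell+n-2)$ and $\Gamma(2\ell+1)$ inside $(n-2)_{2\ell}/(2\ell)!$ to produce the factors $\Gamma(\ell+n/2-1)$ and $\Gamma(\ell+1)$ that appear in the proposition; the half-integer shifts introduced by duplication are exactly the ones that cancel the half-integer shifts coming out of Watson.

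The one delicate step is the gamma ratio $\Gamma(\tfrac{k-n}{2}+1)/\Gamma(\tfrac{k-n}{2}+1-\ell)$ coming out of Watson, together with the accompanying factor $1/\Gamma(\tfrac12-\ell)$. In the range $2\leq k\leq n-1$ each individual $\Gamma$ can sit on a pole of $\Gamma$, yet the combined object is always finite. I would handle this by reading the ratio as the Pochhammer symbol $(\tfrac{k-n}{2}+1-\ell)_\ell$ and rewriting $1/\Gamma(\tfrac12-\ell)$ via the reflection identity $\Gamma(\tfrac12-\ell)\Gamma(\tfrac12+\ell)=(-1)^\ell\pi$; the two signs $(-1)^\ell$ cancel and one arrives at the clean ratio of ordinary $\Gamma$ values of the stated formula. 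This is the main source of potential sign and index errors and is the place requiring the most care. As sanity checks I would verify that the resulting formula specializes at $k=2$ to Proposition~\ref{prop:2singular} and at $k=4$ to the example displayed earlier in this section.
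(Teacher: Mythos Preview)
Your argument is correct and complete as an outline: the ${}_2F_1$ representation of $P_{2\ell}$, the Euler integral producing a terminating ${}_3F_2$ with lower parameters $(a+b+1)/2$ and $2c$, Watson's summation, and the reflection/duplication cleanup all go through exactly as you describe. The $(-1)^\ell$ from $((k-n)/2+1-\ell)_\ell=(-1)^\ell\Gamma((n-k)/2+\ell)/\Gamma((n-k)/2)$ does cancel the $(-1)^\ell$ from $1/\Gamma(1/2-\ell)=(-1)^\ell\Gamma(1/2+\ell)/\pi$, and after the duplication identities one lands on
\[
I_{k,\ell}=\frac{\sqrt{\pi}\,\Gamma(k/2-1/2)\,\Gamma(n/2+\ell-1)\,\Gamma(n/2-k/2+\ell)}{\Gamma(\ell+1)\,\Gamma(k/2+\ell)\,\Gamma(n/2-1)\,\Gamma(n/2-k/2)},
\]
which (noting the redundant factor $\Gamma(2\ell+n/2-1)/\Gamma(n/2+2\ell-1)=1$ in the displayed statement) is the proposition.

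The paper, however, proceeds differently. It expands $P_{2\ell}$ in even monomials via Szeg\H{o}'s explicit formula, integrates each $t^{2m}(1-t^2)^{(k-3)/2}$ by a Beta integral, and reorganises the resulting $(\ell+1)$-term sum into a terminating ${}_2F_1(-\ell,\,-k/2-\ell+1;\,-2\ell-n/2+2;\,1)$, which is then evaluated by the Chu--Vandermonde identity. So the two routes use different hypergeometric representations and different summation theorems: the paper exploits the even parity of $P_{2\ell}$ early to get a short sum and needs only Vandermonde, whereas your route keeps the full $2\ell$-degree ${}_2F_1$ and relies on the stronger Watson identity. Your approach is slightly more conceptual and avoids the index gymnastics with Pochhammer symbols that occupy the middle of the paper's proof; the paper's approach is more elementary in that Vandermonde is the simplest of the classical summations and can even be given a combinatorial proof (which the paper does immediately afterwards). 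Both simplifications run into the same analytic‐continuation issue at non-positive integer arguments of $\Gamma$ when $n-k$ is a positive even integer, and both resolve it by reading the offending ratio as a Pochhammer symbol.
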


\begin{proof}
	By Proposition \ref{prop:eigen}, it is enough to calculate
	$\int_{-1}^1 P_{2\ell}(t)(1-t^2)^{k/2-3/2}dt$.
	By \cite[equation 4.7.31]{szego75} we have
	\[P_{2\ell}(t)=\sum_{j=0}^{\ell}(-1)^j\frac{2^{2\ell-2j}\Gamma(2\ell-j+n/2-1)}{\Gamma(n/2-1)\Gamma(j+1)\Gamma(2\ell-2j+1)}t^{2\ell-2j}.\]
	Hence, we can start by integrating only the monomials. By standard computations,
	\[\int_{-1}^1t^{2m}(1-t^2)^{k/2-3/2}dt=\frac{\Gamma(m+1/2)\Gamma(k/2-1/2)}{\Gamma(m+k/2)}.\]
	Combining the two, we have
	\[\int_{-1}^1 P_{2\ell}(t)(1-t^2)^{k/2-3/2}dt=\frac{\Gamma(k/2-1/2)}{\Gamma(n/2-1)}\sum_{j=0}^{\ell}(-1)^j\frac{2^{2\ell-2j}\Gamma(2\ell-j+n/2-1)\Gamma(\ell-j+1/2)}{\Gamma(j+1)\Gamma(2\ell-2j+1)\Gamma(\ell-j+k/2)}.\]
	Multiplying inside the sum and diving outside the sum by
	\[\frac{\Gamma(\ell+1)\Gamma(k/2+\ell)}{\sqrt{\pi}\Gamma(2\ell+n/2-1)},\]
	we need to sum over
	\[(-1)^j\frac{2^{2\ell-2j}\Gamma(2\ell-j+n/2-1)\Gamma(\ell-j+1/2)\Gamma(\ell+1)\Gamma(k/2+\ell)}{\Gamma(j+1)\Gamma(2\ell-2j+1)\Gamma(\ell-j+k/2)\sqrt{\pi}\Gamma(2\ell+n/2-1)}.\]
	Since (see \cite[equation 6.1.18]{abramowitz+stegun64})
	\[\frac{2^{2\ell-2j}\Gamma(\ell-j+1/2)}{\sqrt{\pi}\Gamma(2\ell-2j+1)}=\frac{1}{\Gamma(\ell-j+1)},\]
	we can simplify the expression. We denote by $(a)_b$ the Pochhammer symbol. When $a,b\geq 0$ and $b\in\bZ$, we have $(a)_b=\Gamma(a+b)/\Gamma(a)$ and $(-a)_b=(-1)^b\Gamma(a+1)/\Gamma(a-b+1)$. Using these notations, we can reduce the summands to
	\[(-1)^j\frac{\Gamma(\ell+1)\Gamma(k/2+\ell)\Gamma(2\ell+n/2-j-1)}{\Gamma(j+1)\Gamma(\ell-j+1)\Gamma(k/2+\ell-j)\Gamma(2\ell+n/2-1)}=(-1)^j\binom{\ell}{j}\frac{(-k/2-\ell+1)_j}{(-2\ell-n/2+2)_j}.\]
	By the Vandermonde identity (see \cite[Appendix III]{slater66} or the next proposition)
	\[\sum_{j=0}^{\ell}(-1)^j\binom{\ell}{j}\frac{(-k/2-\ell+1)_j}{(-2\ell-n/2+2)_j}=\frac{\Gamma(n/2+l-1)\Gamma(n/2+l-k/2+1/2)}{\Gamma(n/2-k/2)\Gamma(n/2+2l-1)}.\]
	Hence,
	\[\int_{-1}^1 P_{2\ell}(t)(1-t^2)^{k/2-3/2}dt=\frac{\sqrt{\pi}\Gamma(2\ell+n/2-1)\Gamma(k/2-1/2)}{\Gamma(\ell+1)\Gamma(k/2+\ell)\Gamma(n/2-1)}\frac{\Gamma(n/2+l-1)\Gamma(n/2+l-k/2+1/2)}{\Gamma(n/2-k/2)\Gamma(n/2+2l-1)}.\]
\end{proof}

In the proof of Proposition \ref{prop:general_singular_values} we used the Vandermonde identity with integer or half integer parameters. While the identity holds true with greater generality (see \cite{slater66}), when both $k/2$ and $n/2$ are integers, we can prove it using a simple double counting argument. In order to prove this, we write the Pochhammer symbols as factorials, then the special case of the Vandermonde identity we use is equivalent to the following proposition.

\begin{prop}
	Let $a,b,c>0$ be integers, and assume that $c\geq a$. Then,
	\[\sum_{j=0}^b (-1)^j\binom{b}{j}\frac{(a+b)!(2b+c-j)!}{(a+b-j)!(2b+c)!}=\frac{(b+c)!(c-a+b)!}{(c-a)!(2b+c)!}.\]
\end{prop}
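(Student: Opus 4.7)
The plan is to reduce this identity to the standard inclusion--exclusion form of Chu--Vandermonde by pulling out a $j$-independent factor from the summand. First I would observe that the difference of arguments in the ratio $(2b+c-j)!/(a+b-j)!$ is $(2b+c-j)-(a+b-j)=b+c-a$, independent of $j$. Since $c\geq a$, this is a non-negative integer, so one can rewrite
\[\frac{(2b+c-j)!}{(a+b-j)!}=(b+c-a)!\binom{2b+c-j}{b+c-a}.\]
Substituting this in and pulling the $j$-free constant $(a+b)!(b+c-a)!/(2b+c)!$ out of the sum reduces the left-hand side of the identity to
\[\frac{(a+b)!(b+c-a)!}{(2b+c)!}\sum_{j=0}^{b}(-1)^{j}\binom{b}{j}\binom{2b+c-j}{b+c-a}.\]

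Next I would evaluate the remaining sum using the classical identity
\[\sum_{j=0}^{b}(-1)^{j}\binom{b}{j}\binom{n-j}{k}=\binom{n-b}{k-b},\]
applied with $n=2b+c$ and $k=b+c-a$; this produces $\binom{b+c}{c-a}$. The auxiliary identity is precisely the double counting argument the author alludes to: both sides count the $k$-subsets of $\{1,\ldots,n\}$ that contain the fixed block $\{1,\ldots,b\}$. Directly, there are $\binom{n-b}{k-b}$ such subsets (choose the remaining $k-b$ elements from $\{b+1,\ldots,n\}$), while inclusion--exclusion over the events ``element $i$ is absent'' for $i=1,\ldots,b$ yields the alternating sum, the term with index $j$ counting $k$-subsets of $\{1,\ldots,n\}$ that avoid a chosen $j$-subset of $\{1,\ldots,b\}$.

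Finally I would substitute $\binom{b+c}{c-a}=(b+c)!/((c-a)!(a+b)!)$ back into the reduced expression, cancel the $(a+b)!$ factors, and read off the right-hand side $(b+c)!(c-a+b)!/((c-a)!(2b+c)!)$. No substantive obstacle arises; the only thing to double-check is that all lower indices in the binomial coefficients produced along the way are non-negative, which is immediate from $a,b,c\geq 0$ together with the hypothesis $c\geq a$ (so that $b+c-a\geq 0$ and $c-a\geq 0$).
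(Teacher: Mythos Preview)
Your proof is correct and follows essentially the same inclusion--exclusion/double-counting route as the paper: the paper phrases it as computing the probability that a random $(a+b)$-subset drawn from $b$ red and $b+c$ green balls is entirely green, which is exactly the complementary form of your count of $k$-subsets containing a fixed $b$-block. The only difference is cosmetic --- you first isolate the clean alternating sum $\sum_{j}(-1)^{j}\binom{b}{j}\binom{n-j}{k}$ and then interpret it combinatorially, whereas the paper interprets the full summand probabilistically from the outset.
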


\begin{proof}
	Assume we have $b$ red balls, and $c+b$ green balls. We compute the probability to choose $a+b$ green balls out of the $c+2b$ balls.
	
	On the one hand, we can count the ways to have $a+b$ balls out of the $c+b$ green balls and divide by the number of choices of $a+b$ balls,
	\[\binom{c+b}{a+b}\binom{2b+c}{a+b}^{-1}=\frac{(b+c)!(c-a)!}{(2b+c)!(b+c-a)!}.\]
	
	On the other hand, we can use the inclusion-exclusion principle with the events $A_k$ of having the $k$-th red ball. Then,
	\begin{align*}
		\bP(\textrm{having no apples})&=1-\bP(\textrm{having at least on apple})=1-\bP(A_1\cup\cdots\cup A_b)\\
		&=1-\left(\bP(A_1)+\cdots+\bP(A_b)\right)+\left(\bP(A_1\cap A_2)+\cdots+\bP(A_{b-1}\cap A_b)\right)\\
		&+\cdots+(-1)^b\bP(A_1\cap\cdots\cap A_b).
	\end{align*}	
	Using the symmetry of index permutations, we have
	\begin{align*}
		\bP(\textrm{having no apples}) &=\sum_{j=0}^b(-1)^j\binom{b}{j}\bP\left(\bigcap_{i=1}^jA_i\right)=\sum_{j=0}^b(-1)^j\binom{b}{j}\binom{2b+c-j}{a+b-j}\binom{2b+c}{a+b}^{-1}\\
		&=\sum_{j=0}^b (-1)^j\binom{b}{j}\frac{(a+b)!(2b+c-j)!}{(2b+c)!(a+b-j)!}.
	\end{align*}
\end{proof}

\begin{cor}\label{cor:decreasing}
	Let $2\leq k\leq n-1$. The sequence $\{\lambda_{k,2\ell}\}^2$ of eigenvalues of $S_k$ is a decreasing sequence.
\end{cor}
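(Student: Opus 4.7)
The plan is to form the ratio $\lambda_{k,2(\ell+1)}^2/\lambda_{k,2\ell}^2$ using the closed-form expression from Proposition \ref{prop:general_singular_values} and show it is at most $1$. This is the same strategy used in Proposition \ref{prop:decreasing} for the special case $k=2$, now carried out with three Gamma-function pairs instead of one binomial. Since the whole formula is a product of explicit factors, only the $\ell$-dependent pieces contribute to the ratio, and every such piece becomes a polynomial factor via the identity $\Gamma(a+1)/\Gamma(a) = a$.

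Concretely, I would proceed as follows. First, the binomial $\binom{2\ell+n-3}{2\ell}^{-1}$ contributes the ratio $\tfrac{(2\ell+1)(2\ell+2)}{(2\ell+n-2)(2\ell+n-1)}$. Next, observe that the factor $\Gamma(2\ell+n/2-1)$ in the first fraction of the formula equals the denominator factor $\Gamma(n/2+2\ell-1)$ in the second fraction; writing them together cancels this pair outright and removes what would otherwise have been a quadratic in $\ell$ from both numerator and denominator of the ratio. The remaining $\ell$-dependent Gammas — $\Gamma(\ell+1)$, $\Gamma(k/2+\ell)$, $\Gamma(n/2+\ell-1)$, and the last Gamma in the numerator of the second fraction — each contribute a single linear factor in $\ell$ when passing from $\ell$ to $\ell+1$. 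After elementary cancellations (notably $(2\ell+2)/(\ell+1) = 2$ and $(2\ell+n-2) = 2(n/2+\ell-1)$) the ratio collapses to a compact rational expression whose numerator and denominator are each products of two linear factors in $\ell$.

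At this point the desired inequality reduces to checking that a specific linear (in $\ell$) expression with coefficients depending on $k$ and $n$ is nonnegative. Cross-multiplying and expanding, the difference between denominator and numerator factorizes as $(k-1)$ times a positive linear form in $\ell$ and $n$; in particular specializing $k=2$ must recover the inequality $n+4\ell \geq 0$ from Proposition \ref{prop:decreasing}, which serves as a sanity check. The main obstacle is pure bookkeeping — keeping track of the Gamma ratios and not misplacing a factor of $2$ or a sign — and once the algebra is arranged, positivity is immediate for all $k \geq 2$ and $n \geq k+1$.
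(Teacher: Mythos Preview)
Your proposal is correct and follows essentially the same route as the paper: the paper too forms the ratio $\lambda_{k,2\ell+2}^2/\lambda_{k,2\ell}^2$ from the closed formula in Proposition~\ref{prop:general_singular_values}, uses $\Gamma(x+1)/\Gamma(x)=x$ to reduce each $\ell$-dependent Gamma to a linear factor, and obtains the explicit ratio $\tfrac{(2\ell+1)(2\ell+n-k)}{(2\ell+k)(2\ell+n-1)}$, whose numerator--denominator gap is exactly $(k-1)(4\ell+n)$, confirming your sanity check at $k=2$.
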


\begin{proof}
	Using the functional relationships $\Gamma(x+1)/\Gamma(x)=x$ and $(x+1)_m/(x)_m=(m+x)/x$, by Proposition \ref{prop:general_singular_values}, we have
	\begin{equation}\label{eq:decreasing}
		\frac{\lambda_{k,2\ell+2}^2}{\lambda_{k,2\ell}^2}=\frac{(2\ell+1)(2\ell+n-k)}{(2\ell+k)(2\ell+n-1)}.\tag{$\star$}
	\end{equation}
	We note that for any $k>1$, this ratio is strictly less than one, hence the sequence is decreasing.
\end{proof}

Using the above calculations, we can prove Theorem \ref{thm:general_k}.
\begin{proof}[Proof of Theorem \ref{thm:general_k}]
	By Corollary \ref{cor:decreasing} the non trivial singular values of $R_k$ are at most $\lambda_{k,2}$. By Proposition \ref{prop:general_singular_values} we have
	\[\lambda_{k,2}^2=\tau_k \binom{n-1}{2}^{-1}\frac{\sqrt{\pi}\Gamma(n/2+1)\Gamma(k/2-1/2)}{\Gamma(k/2+1)\Gamma(n/2-1)}\frac{n-k}{n}.\]
	In addition,
	\[\tau_k=\frac{\Gamma(k/2)}{\sqrt{\pi}\Gamma(k/2-1/2)}.\]
	We have,
	\[\lambda_{k,2}^2=\frac{2(n-k)\Gamma(n/2+1)\Gamma(k/2)}{n(n-1)(n-2)\Gamma(k/2+1)\Gamma(n/2-1)}=\frac{n-k}{k(n-1)}.\]
	Hence, for any $f\in L^2(S^{n-1})$ we have
	\[\var(R_kf)\leq \frac{n-k}{k(n-1)}\var(f).\]
	To finish the proof, we set $f$ to be the normalized indicator of the subset $A\subseteq S^{n-1}$.
\end{proof}

The importance of Corollary \ref{cor:decreasing} is not only in showing that the sequence is decreasing, but also by the rate of the decrease in \eqref{eq:decreasing}.

\begin{example}
	When $k=n-1$, Equation \eqref{eq:decreasing} gives us
	\[\lambda_{n-1,2\ell}\leq \left(C\frac{\ell}{n+\ell}\right)^{\ell}.\]
	This is the rate used by Klartag and Regev in \cite{klartag+regev11} to prove the result for intersection with hyper-planes.
\end{example}

Another interesting case is when $k=\lfloor n/2\rfloor$. Equation \eqref{eq:decreasing} gives us
\[\lambda_{n/2,2\ell}\leq \left(C\frac{\ell}{n+\ell}\right)^{\ell/2}.\]

An interesting question would be to give a direct proof for Theorem 6.1 in \cite{klartag+regev11} using this result. 
A possible starting point would be to generalize the hypercontractivity result (Lemma 5.3 in \cite{klartag+regev11}) for the Grassmanian manifold $G_{n,k}$.


\end{document}